\documentclass[11pt,letterpaper,reqno]{amsart}
\usepackage{amsfonts}
\usepackage{amsmath}
\usepackage{amsthm, amscd}
\usepackage{mathtools}
\usepackage{amssymb}
\usepackage{mathrsfs}
\usepackage{graphicx}
\usepackage{caption}
\usepackage{subcaption}
\usepackage{float}
\usepackage{xypic}
\usepackage[abs]{overpic}
\usepackage[alphabetic]{amsrefs}
\usepackage{etex}
\usepackage{tikz-cd}

\usepackage{hyperref}
\hypersetup{
	colorlinks,
	linkcolor=[rgb]{0,0,0.7},
	urlcolor=[rgb]{0,0,0.4},
	citecolor=[rgb]{0.4,0.1,0}
}

\allowdisplaybreaks

\theoremstyle{plain}\newtheorem{Theorem}{Theorem}[section]
\theoremstyle{plain}\newtheorem{Corollary}[Theorem]{Corollary}
\theoremstyle{plain}\newtheorem{Lemma}[Theorem]{Lemma}
\theoremstyle{plain}\newtheorem{Definition}[Theorem]{Definition}
\theoremstyle{plain}\newtheorem{Proposition}[Theorem]{Proposition}
\theoremstyle{plain}
\theoremstyle{plain}
\theoremstyle{plain}\newtheorem*{Claim*}{Claim}
\theoremstyle{plain}\newtheorem*{Theorem*}{Theorem}
\theoremstyle{plain}\newtheorem{Question}[Theorem]{Question}

\theoremstyle{remark}\newtheorem{remark}[Theorem]{Remark}
\theoremstyle{remark}
\theoremstyle{remark}\newtheorem*{Notation*}{Notation}

\theoremstyle{plain}
\makeatletter
\newtheorem*{rep@theorem}{\rep@title}
\newcommand{\newreptheorem}[2]{
\newenvironment{rep#1}[1]{
 \def\rep@title{#2 \ref{##1}}
 \begin{rep@theorem}}
 {\end{rep@theorem}}}
\makeatother
\newreptheorem{Theorem}{Theorem}
\newreptheorem{Proposition}{Proposition}
\newreptheorem{Corollary}{Corollary}

\numberwithin{equation}{section}

\usepackage{lipsum}

\DeclareMathOperator{\Emb}{Emb}
\DeclareMathOperator{\Diff}{Diff}

\title{Non-isotopic surfaces in $T^4\#(S^2\times S^2)$: an example}
\author{Jianfeng Lin}
\address{Yau Mathematical Sciences Center, Tsinghua University, Beijing 100084, China}
\email{linjian5477@mail.tsinghua.edu.cn}
\author{Yue Wu}
\address{Qiuzhen College, Tsinghua University, Beijing 100084, China}
\email{yue-wu23@mails.tsinghua.edu.cn}

\begin{document}

\maketitle

\begin{abstract}
We prove that there exist infinitely many embedded tori with a common geometric dual in $T^4\#(S^2\times S^2)$ that are homotopic, diffeomorphic, but not isotopic to each other, even after arbitrary many external stabilizations. These surfaces are obtained by applying the Norman trick to a fixed immersed surface, using non-homotopic tubing arcs. The isotopy classes of these surfaces are distinguished by homotopy classes of the 2-handles (relative to the boundary) in the complement of the image of the $0$- and $1$-handles. 
\end{abstract}

\section{Introduction}
\label{intro}

Given an oriented smooth 4-manifold $X$, there are two important equivalence relations for embedded surfaces in $X$: homotopy and smooth isotopy. While smooth isotopy implies homotopy, the converse is not always true, leading to a fundamental question in 4-dimensional  topology:
\begin{Question}\label{question}
    Suppose $\Sigma_1,\Sigma_2$ are embedded surfaces in $X$ that are homotopic to each other, what are the obstructions to smoothly isotoping $\Sigma_1$ to $\Sigma_2$?
\end{Question}

Question \ref{question} has been explored in many special cases, including  partial answers in both positive and  negative directions. Many results in this direction involve an operation called "stabilization". For example, after taking sufficiently many \emph{external stabilizations} (i.e. taking a connected sum of the ambient manifold with $S^2 \times S^2$ away from the surfaces), any two homologous embedded closed surfaces of the same genus with simply-connected complement are smoothly isotopic (see Remark \ref{surface stable isotopic}).
In \cite{Baykur_2015}, it is shown that any pair of homologous embedded surfaces $\Sigma_i, i = 1, 2$ in $X$ become smoothly isotopic after sufficiently many times of \emph{internal stabilizations} (i.e. attaching an embedded handle to the surface). Gabai's 4-dimensional lightbulb theorem \cite{Gabai4DLBT} states that if $\pi_1(X)$ has no $2$–torsion and $\Sigma_1,\Sigma_2$ are $G$–inessential, where $G$ is a common geometric dual sphere for $\Sigma_1$ and $\Sigma_2$, then homotopy implies isotopy. (See \cite{schwartz20214,Teichner2019,Kosanavic} for various generalizations.) Building on Gabai's result, Auckly-Kim-Melvin- Ruberman-Schwartz \cite{AKMRS} proved that any pair of homologous surfaces are isotopic after one external stabilization, provided that their complements are simply-connected and they are non-characteristic. In the other direction, for $X=\Sigma\times S^2$, \cite[Theorem 1.2]{LXZ25Dax} showed that there exist infinitely many embeddings of $\Sigma$ into $X$ which have a common geometric dual $G=\{*\}\times S^2$, are mutually homotopic, but non-isotopic to each other. In fact, the authors use the Dax invariant to classify the isotopy classes of embeddings of $\Sigma$ into $X$.

Consider two embeddings $i_1, i_2:\Sigma\rightarrow X$ that are homotopic to each other. 
Naturally, one could attempt to construct an isotopy between $i_1$ and $i_2$ inductively on handles. Fix a handle decomposition $H_0\cup H_1\cup H_2$ of $\Sigma$, where $H_0$ is the single 0-handle, $H_1$ is the union of $2g$ 1-handles and $H_2$ is the 2-handle.  For a dimensional reason, $i_2|_{H_0\cup H_1}$ is isotopic to $i_1|_{H_0\cup H_1}$. By the isotopy extension theorem, there exists an isotopy $\mathcal{I}$ from $i_2$ to some embeddeding $i_2':\Sigma\to X$ such that $i'_2|_{H_0\cup H_1}=i_1|_{H_0\cup H_1}$. By removing an open tubular neighborhood $\nu(i_1(H_0\cup H_1))$, we obtain a manifold $X'$ with boundary. Then the restrictions $i_1|_{H_2}$ and $i'_2|_{H_2}$ give properly embedded disks $D^2_1, D^2_2\hookrightarrow X^{\prime}$ that coincide with each other along their boundary.Therefore, $D^2_1\cup D^2_2:S^2\to X'$
represents an element $o(i_1,i_2,\mathcal{I})\in \pi_{2}(X')$. Consider the set 
\[
    S(i_1, i_2)=\{o(i_1,i_2,\mathcal{I})|\text{$\mathcal{I}$ is an isotopy  from $i_2$ to $i_2'$ with $i'_2|_{H_0\cup H_1}=i_1|_{H_0\cup H_1}$}\}.
\]
As a primary obstruction to this inductive approach, one examines whether $0$ lies in $S(i_1,i_2)$. If one can show that $0\notin S(i_1,i_2)$,equivalently, that $D^2_2$ is not homotopic to $D^2_1$ relative to the boundary after any isotopy $\mathcal{I}$, then $i_1$ is not isotopic to $i_2$. On the other hand, if $0\in S(i_1,i_2)$, then one can study a secondary obstruction, given by the relative Dax invariant $\operatorname{Dax}(D^2_1,D^2_2)$ (see \cite{gabai2021self,LXZ25Dax,Kosanavic}).) As a natural question, we may ask whether the primary obstruction can actually be nontrivial. This question can be stated more explicitly as follows.

\begin{Question}\label{quest: homotope 2-handle in the complement of 1-handle}
Given two embeddings $i_0,i_1:\Sigma\to X$ that are homotopic to each other, is it always possible to isotope $i_1$ such that the following two conditions are both satisfied ?
\begin{itemize}
    \item $i_1|_{H_0\cup H_1}=i_{0}|_{H_0\cup H_1}$. 
    \item The maps $i_{0}|_{H_2}, i_{1}|_{H_{2}}:D^2\to X\setminus i_0(\mathrm{int}(H_0\cup H_1))$ are homotopic relative to the boundary.
\end{itemize}
\end{Question}

In this paper, we give a negative answer to Question \ref{quest: homotope 2-handle in the complement of 1-handle}, even in the presence of a geometric dual.

\begin{Theorem}[Main theorem]\label{main thm}
     Let $X=T^4\#(S^2\times S^2)$. Then there is a collection of embeddings \[\{i_{\sigma}: T^2\rightarrow X\}_{\sigma\in \pi_{1}(T^2)}\] that satisfies the following conditions:
    \begin{enumerate}
        \item The surfaces $\{i_{\sigma}\}$ are homotopic, have diffeomorphic complements and have a common geometric dual $G$;
                \item For any  $\sigma_1\neq \sigma_2\in\pi_1(T^2)$, one has $0\notin S(i_{\sigma_1},i_{\sigma_2})$. In particular, $i_{\sigma_1}$ is not isotopic to $i_{\sigma_2}$;
        \item For any $\sigma_1\neq\sigma_2\in\pi_1(T^2)$, the embeddings $i_{\sigma_1}$ and $i_{\sigma_2}$ remain non-isotopic after arbitrary many external stabilizations.
    \end{enumerate}
\end{Theorem}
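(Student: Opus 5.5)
We will build the surfaces by the Norman trick, as the abstract indicates. Fix the standard torus $T^2\times\{pt\}\subset T^4$, regarded as sitting in $X=T^4\#(S^2\times S^2)$. We take the immersed torus $j=T^2\times\{pt\}\# (S^2\times\{pt\})$, or a small perturbation of it, arranged to have a single transverse self-intersection point $p$. We then remove $p$ by tubing one sheet along an arc to a parallel copy of the sphere $\{pt\}\times S^2$. The result is an embedded torus, and $G$, a further parallel copy of $\{pt\}\times S^2$ meeting the sheet once, is a geometric dual. The tubing arc runs from $p$ back to $p$ along the surface, so we may choose it to represent any class $\sigma\in\pi_1(T^2)\cong \pi_1(X)=\mathbb{Z}^4\supset\mathbb{Z}^2$. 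This defines $i_\sigma$.

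Property (1) will follow from three observations.
\begin{itemize}
\item The tube contributes a sphere that is nullhomotopic up to the fixed dual class, so all the $i_\sigma$ are homotopic to $j$ after resolving. Equivalently, the Norman trick changes the map only by a homotopy across $\{pt\}\times S^2$.
\item The complements are diffeomorphic. A translation of $T^4$ in the $T^2$ directions, isotopic to the identity, slides the base point around the loop $\sigma$. Combined with Dehn-twist-like diffeomorphisms of the $T^2\times T^2$ factor carrying one arc to another, this gives a diffeomorphism of $X$ taking $i_{\sigma_1}$ to $i_{\sigma_2}$ up to isotopy. Alternatively, we can argue directly that each complement is $(T^4\setminus \nu T^2)\cup$ (a standard piece).
\item $G$ is a common dual by construction.
\end{itemize}

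Property (2) is the core, and we expect it to be the main obstacle. Take the handle decomposition with $H_0\cup H_1$ a neighborhood of the wedge of two circles, placed away from the tube, so that all $i_\sigma$ agree on $H_0\cup H_1$. Then $X'=X\setminus\nu(H_0\cup H_1)$. Since a $1$-skeleton in dimension $4$ has codimension $3$, we get $\pi_1(X')=\mathbb{Z}^4$, and $\pi_2(X')$ is a $\mathbb{Z}[\mathbb{Z}^4]$-module. It contains $\pi_2(X)=\mathbb{Z}[\mathbb{Z}^4]^2$, generated by the $S^2\times S^2$ spheres $A,B$, together with classes of linking spheres of the $1$-handles. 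The difference $D_{\sigma_1}\cup D_{\sigma_2}$ equals $(t^{\sigma_1}-t^{\sigma_2})\cdot[\{pt\}\times S^2]$ plus terms that do not depend on $\sigma$. We then have to show that no isotopy $\mathcal I$ can cancel this difference. Changing $\mathcal I$ changes $o$ by the image of $\pi_1$ of the space of embeddings of $H_0\cup H_1$, i.e. loops of the $1$-skeleton. These act through the linking spheres, through the boundary spheres $\partial\nu$ of the $1$-handles (which become trivial in $X$), and through sweep-outs of $T^2$ directions, the last multiplying by units $t^{\tau}$ with $\tau\in\mathbb{Z}^2$.

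We would detect $o$ by the equivariant intersection number with a dual class. Concretely, take $\lambda(o,A)\in\mathbb{Z}[\mathbb{Z}^4]$, with $A=S^2\times\{pt\}$ in the summand. This pairing vanishes on the indeterminacy coming from the linking spheres, since those lie near the $1$-skeleton, disjoint from $A$. It yields $t^{\sigma_1}-t^{\sigma_2}$ up to multiplication by units $\pm t^{\tau}$ and the indeterminacy from loops of $H_1$. We then check that the latter never produces zero when $\sigma_1\ne\sigma_2$. The key point is that $t^{\sigma_1}-t^{\sigma_2}$ is nonzero and is not in the indeterminacy subgroup. The main delicacy is computing that subgroup exactly.

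For property (3), external stabilization replaces $X$ by $X\#_k S^2\times S^2$. The invariant $\lambda(o,A)$ persists because the new spheres have zero intersection with $A$ and add only free summands to $\pi_2$, which the pairing ignores. Hence the same argument applies verbatim, and non-isotopy follows from (2) in the stabilized manifold.
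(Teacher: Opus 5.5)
There is a genuine gap in the core step (2). Your pairing $\lambda(o,A)$, with $A$ dual to the tubed-in copies of $\{pt\}\times S^2$ (the paper's $R$), is exactly the component that the indeterminacy can kill. Your list of loops of $H_0\cup H_1$ (linking spheres, boundary spheres, base-point sweeps) does not generate $\pi_1(\Emb(H_0\cup H_1,X))$. Because $\pi_1$ of the arc-embedding space surjects onto a $\pi_2$-part, one also needs loops spinning a core arc $I_u$ around the dual sphere $G$, and around $R$. The barbell diffeomorphism realizing the spin of $I_u$ around $G$ along $\lambda$ tubes the disk with $\pm G$ at the two points $x_u^\pm$ where the meridian $m_u$ meets it. So it changes $\lambda(\cdot,A)$ by $\lambda_+-\lambda_-=\lambda_-(g^{\pm1}-1)$, where $g$ is a generator of $\pi_1(T^2)$ and $\lambda_-\in\pi_1(X')$ is arbitrary. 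These elements span the ideal $(g_1-1,g_2-1)\subset\mathbb{Z}[\mathbb{Z}^4]$. That ideal contains $t^{\sigma_1}-t^{\sigma_2}$ precisely because $\sigma_1,\sigma_2\in\pi_1(T^2)$. Hence your final check, that $t^{\sigma_1}-t^{\sigma_2}$ is not in the indeterminacy, is false rather than merely delicate, and (3) inherits the failure.

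The missing idea is that this same barbell also tubes the disk with $\overline{m_u}$ at the dual point $x_G$. That shifts the $D^2_0$-pairing component, which lies in $\mathcal{J}$, by $-(\lambda_+^{-1}-\lambda_-^{-1})$. Every other generator moves that component only inside $\mathcal{J}^2$: framing twists, spinning around meridians, spinning around $R$, and base-point loops. If you track the $R$-component jointly with the $D^2_0$-component modulo $\mathcal{J}^2$, cancelling the former forces $\sigma_1^{-1}-\sigma_2^{-1}\in\mathcal{J}^2$. This contradicts $\mathcal{J}/\mathcal{J}^2\cong\mathbb{Z}^4$. So the obstruction lives partly in the linking-sphere part of $\pi_2(X')$ that your pairing deliberately discards.

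Part (1) also has a gap in the diffeomorphic-complements claim. Translations of $T^4$ are isotopic to the identity, so they cannot relate surfaces that (2) shows are non-isotopic. A diffeomorphism preserving the torus and the connected-sum region changes the arc class only by an automorphism of $\pi_1(T^2)$; for instance, it can never take the trivial class to a nontrivial one. The paper's mechanism is different. It notes that $G_{12}=G_1\#_\sigma\overline{G_2}$ is a sphere disjoint from $\Sigma_0$ with $\Sigma_\sigma=\Sigma_0\#_\kappa G_{12}$. It then uses the barbell diffeomorphism supported on $\nu(G_3\cup\kappa_1\cup G_{12})$, with $G_3$ a further parallel dual, which carries $\Sigma_0$ to $\Sigma_\sigma$.

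Your homotopy justification is also misphrased. The tubed-in sphere represents $(1-\sigma)[G]\neq 0$, so it is not nullhomotopic. What makes the maps homotopic is that $\sigma$ lies in the image of $\pi_1(T^2)$, so the $\overline{G_2}$ summand can be dragged around that loop on the surface.
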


We now give an explicit construction of the embedded surfaces $\Sigma_{\sigma}=i_{\sigma}(T^2).$ First, we let $\Sigma_0=i_0(T^2)$ be obtained by taking the connected sum between $(\{*\}\times T^2,T^4)$ and $(\{*\}\times S^2,S^2\times S^2)$. 
Consider the sphere $G_1=S^2\times\{*\}\hookrightarrow X$, which is a geometric dual of $\Sigma_0$. Let $\{p_1\}=G\cap \Sigma_1$. Take a small disk $B_1\subset \Sigma_0$ such that $p_1\in \partial B_1$. Let $G_2$ be a parallel copy of $G_1$ that passes some point $p_{2}\in \partial B_1\setminus \{p_1\}$. For each $0\neq \sigma\in \pi_{1}(\Sigma_0,B_1)$, there is a unique (up to isotopy) embedded arc $\gamma_{\sigma}\hookrightarrow \Sigma_0\setminus \mathring{B_1}$ that goes from $p_1$ to $p_2$ and represents $\sigma$. Pick a standard arc $\kappa$ that connect $\Sigma_0$ and $G_1$ and lies in a small neighborhood of $p_1$ (see Section \ref{sec_constru} for the precise definition of $\kappa$) and consider the immersed surface $\Sigma'_{0}=\Sigma_0\#_{\kappa} G_1$ where $\#_{\kappa}$ means tubing along $\kappa$. Note that $\Sigma^{\prime}_0$ has a single double point $p_{1}$, we can apply the Norman trick to resolve this double point, using the arc $\gamma_{\sigma}$ and the geometric dual $G_2$. Namely, we attach a tube from $\Sigma'_0$ to $G_2$ along $\gamma_{\sigma}$. The resulting surface is the desired $\Sigma_{\sigma}$.

\begin{figure}
	\begin{overpic}[width=0.6\textwidth]{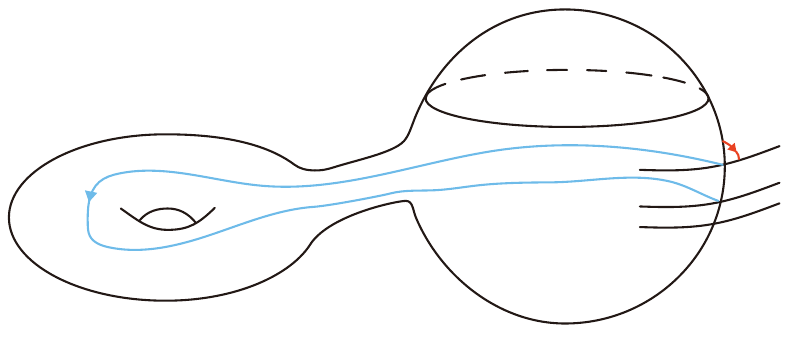}
        \put(-9,45){$\Sigma_0$}
		\put(200,23){$G$}
		\put(215,40){$G_{2}$}
		\put(214,56){$G_{1}$}
        \put(15,33){$\sigma$}
        \put(198,55){$\kappa$}
	\end{overpic}
	\caption{The construction of $\Sigma_{\sigma}$}
    \label{fig:Sigma_1}
\end{figure}

\begin{remark}
We conjecture that there is an alternative definition of the obstruction $S(i_{\sigma_1},i_{\sigma_2})$ in terms of the induced map $\bar{C}_{2}(T^2)\to \bar{C}_{2}(X)$ between the compactified configuration spaces. It would be interesting to give such an alternative definition for pairs of  embedded surfaces in a general 4-manifold $X$.    \end{remark}

 \begin{remark}\label{surface stable isotopic}
 By results of Boyer \cite[Theorem F]{Boyer93} and Galvin \cite[Theorem B]{Galvin} (see also \cite[Appendix A]{PowellOrsonGalvin} for an alternative proof), any pair of homologous surfaces with identical genus and simply-connected complements are isotopic after sufficiently many external stabilizations. One may ask whether the same result holds with a different condition that the complements are diffeomorphic. Theorem \ref{main thm} implies this is not true in general.
\end{remark}

\begin{remark}
Although we focus on the smooth category in this paper, we expect that analogous results can also be proved in the topological category. 
\end{remark}
Now we sketch the proof that $i_{\sigma_1}$ is not isotopic to $i_{\sigma_2}$ whenever $\sigma_1\neq \sigma_2$. First note that $i_{\sigma_1}$ and $i_{\sigma_2}$ are equal to each other on the complement of a disk $D^2\subset T^2$, so we may pick a handle decomposition $H_0\cup H_1\cup H_2$ of $T^2$ such that $i_{\sigma_1}|_{H_0\cup H_1}=i_{\sigma_2}|_{H_0\cup H_1}$. As a result, given any isotopy $\mathcal{I}$ from $i_{\sigma_1}$ to some embedding $i'_{\sigma_1}$ with $i'_{\sigma_1}|_{H_0\cup H_1}=i_{\sigma_2}|_{H_0\cup H_1}$, we have a loop in $\Emb(H_0\cup H_1,X)$. Here $\Emb(H_0\cup H_1,X)$ is the space of embeddings of $H_0\cup H_1$ into $X$. Let $\pi_2(X^{\prime},\partial D^2_0)$ be the homotopy class of maps $D^2\to X^{\prime}$ that agree with the embedded disk $D^2_0:=i_0(H_{2})\cap X'$ on the boundary. To prove Theorem \ref{main thm}, we study the action of $\pi_1(\Emb(H_0\cup H_1,X))$ on $\pi_2(X^{\prime},\partial D^2_0)$ via the isotopy extension theorem. It is shown that $\pi_1(\Emb(H_0\cup H_1,X))$ is generated by various spinning families. By applying the isotopy extension theorem to these generators, we obtain various barbell diffeomorphisms on $X'$. We  explicitly compute the maps on $\pi_2(X^{\prime},\partial D^2_0)$ induced by these barbell diffeomorphisms and conclude that $o(i_{\sigma_1},i_{\sigma_2},\mathcal{I})\neq 0$.
 
The paper is organized as follows. Section \ref{Preli} reviews some facts about spinning families of arcs and barbell diffeomorphisms. The construction of $i_{\sigma}$ is given in Section \ref{sec_constru}. The action of $\pi_1(\Emb(H_0\cup H_1,X))$ on $\pi_2(X^{\prime},\partial D^2_0)$ is calculated in Section \ref{sec_comput}. Theorem \ref{main thm} is proved in Section \ref{sec_proof}. 
\\
\\
\noindent
\textbf{Acknowledgments:} We would like to thank Boyu Zhang, Yi Xie and Mark Powell for inspiring discussions. J. Lin is partially partially supported by National Key R \& D Program of China (2025YFA1017500) and National Natural Science Foundation of China (12271281).

\section{Preliminary}
\label{Preli}
In this paper, we assume all manifolds, embeddings and isotopies are smooth.

\begin{Definition}
    Suppose $\Sigma$ is an embedded surface in a 4-manifold $M$, and $G$ is an embedded sphere in $M$. If $G$ has trivial normal bundle in $M$ and intersects $\Sigma$ transversely at exactly one point, then $G$ is called a \emph{geometric dual} of $\Sigma$.
\end{Definition}
\begin{Definition}
    Suppose $\Sigma_0,\Sigma_1$ are immersed surfaces in a 4-manifold $M$, and $\gamma:I\rightarrow M$ is an embedded path with $\gamma(i)\in \Sigma_i$, $i=0,1$. Assume $\gamma$ intersects $\Sigma_i$ transversely at $\gamma(i)$.  We pick a tubular neighborhood $\nu(\gamma)$ and a diffeomorphism $\rho:\nu(\gamma)\cong (-\epsilon,1+\epsilon)\times \mathbb{R}^3$ such that $\rho(\gamma)=[0,1]\times (0,0,0)$ and that $\rho(\nu(\gamma)\cap \Sigma_{i})=\{(i,x,y,0)\}$. Consider the embedded cylinder  $T(\gamma)=\rho(\{(t,x,y ,0)\mid t\in [0,1],x^2+y^2=1\})$ and the embedded disk $D_{i}:=\rho(\{(i,x,y ,0)\mid x^2+y^2\leq1\})$.
    After a canonical smoothing of  corners of the surface
    \[
        (\Sigma_0-\operatorname{int}D_0)\cup_{\partial D_0} T(\gamma)\cup_{\partial D^1} (\Sigma_1-\operatorname{int}D_1),
    \]
     we get a new immersed surface in $M$, called the surface obtained by tubing $\Sigma_0$ with $\Sigma_1$ along $\gamma$ and denoted by $\Sigma_0\#_{\gamma}\Sigma_1$.
    The choice of $\rho$ doesn't affect the isotopy class of $\Sigma_0\#_{\gamma}\Sigma_1$.
\end{Definition}

Let $\Sigma$ be an immersed surface with only transverse double points $x_1,\cdots, x_{n}$. Let $G$ be a geometric dual of $\Sigma$ and let $G_{1},\cdots, G_{n}$ be parallel copies. For each $1\leq i\leq n$, we take an embedded path $\gamma_{i}:I\to \Sigma$ such that $\gamma_{i}(0)=x_{i}$, $\{\gamma_i(1)\}=G_{i}\cap \Sigma$ and $\gamma_i(0,1)\subset \Sigma-\{x_{1},\cdots, x_{n}\}$. Then we take a tube $T(\gamma_{i})$ that connects $\Sigma$ with $G_{i}$. Even though the arcs $\gamma_{i}$ may intersect each other, we may set the radius of $\{T(\gamma_{i})\}$ to be all different so that they don't intersect. We let \[\Sigma'=\Sigma\#_{T(\gamma_1)}G_1\#_{T(\gamma_2)}G_2\#\cdots\#_{T(\gamma_n)}G_n.\]
Then $\Sigma'$ is an embedded surface. This procedure of removing double points is called \emph{Norman's trick}\cite{NORMAN1969}.



Let $M$ be a four manifold with nonempty boundary. Fix a family of disjoint neatly embedded arcs $\iota:\bigsqcup\limits_{1\leq u\leq m} I_u\rightarrow M$ in $M$, equipped with a fixed normal section $s_0$. (A normal section on a submanifold means a nowhere vanishing section of its normal bundle. ) Here, $I_u$ is a copy of $I=[0,1]$. By abuse of notation, we also denote the image of $I_u$ under $\iota$ by $I_u$.
\begin{Definition}
     Define $\Emb_{\partial}(\sqcup_u I_u,M)$ to be the space of family of neatly embedding arcs in $M$ which coincide with $\iota$ near boundary. We take $\iota$ to be the basepoint of $\Emb_{\partial}(\sqcup_u I_u,M)$. Define $\Emb^{\prime}_{\partial}(\sqcup_u I_u,M)$ to be the space of embeddings $i:\sqcup_u I_u\rightarrow M$ equipped with a normal section $s$ which coincide with $(\iota,s_0)$ near boundary. And we take  $(\iota,s_0)$ to be the basepoint of $\Emb^{\prime}_{\partial}(\sqcup_u I_u,M)$.
\end{Definition}
\begin{Definition}
    For subsets $A,B$ of $M$, we denote by $\pi_1(M;A,B)$ the set of homotopy classes of paths $\gamma:[0,1]\rightarrow M$ with $\gamma(0)\in A,\gamma(1)\in B$ and $\operatorname{int} \gamma\cap(A\cup B)=\emptyset$.
\end{Definition}
 \begin{Definition}\label{defi:spinning arc around sphere}
    Let $S$ be an embedded sphere in $M$ with trivial normal bundle and disjoint from $\iota$ and $s$ is a fixed normal section on $S$. Given $1\leq j\leq m$  and a path $\lambda\in\pi_1(M;I_j,S)$, we may assume $\operatorname{int}\lambda\cap(\iota\cup S)=\emptyset$. For $u\neq j$, keep $(I_u,s_0|_{I_u})$ fixed. Homotope $I_j$ along $\lambda$ to get an arc intersects $S$ in a segment $\overline{I}_j$ with normal section $s_{\overline{I}_j}$, swipe $\overline{I}_j$ along $S$ and the normal section is always given by the restriction of $s$. Finally homotope $\overline{I}_j$ back along $\lambda^{-1}$. Then we get a loop in $\Emb_{\partial}^{\prime}(\sqcup_u I_u,M)$, denoted by $\hat{\eta}_{\lambda}\in\pi_1(\Emb_{\partial}^{\prime}(\sqcup_u I_u,M))$. This loop is called the \emph{spinning families of arcs obtained by spinning $I_j$ around $S$ along $\lambda$}. 
 \end{Definition}
 
 \begin{Definition}\label{defi: spinning family}
    Given $1\leq u,v\leq m$ and $g\in\pi_1(M;I_u,I_v)$, represent $g$ by an embedded path $\alpha$ from $I_u$ to $I_v$, such that $\operatorname{int}\alpha\cap \iota=\emptyset$. Let $m_v$ be a meridian of $I_v$, and we homotope $\alpha$ slightly to get a path $\alpha^{\prime}$ from $\iota$ to $m_v$. We further assume $\operatorname{int}\alpha^{\prime}\cap(\iota\cup m_v)=\emptyset$. Then we get spinning families $\hat{\tau}_g\in\pi_1(\Emb_{\partial}^{\prime}(\sqcup_u I_u,M))$ obtained by spinning $I_u$ around $m_v$ along $g$ as described in Definition \ref{defi:spinning arc around sphere}.
 \end{Definition}
In some cases, we need to consider a spinning families of arcs without normal section. Note that we have a fibration:
\begin{equation}
\label{fibration of normal section}
    \Emb_{\partial}^{\prime}(\sqcup_u I_u,M)\rightarrow \Emb_{\partial}(\sqcup_u I_u,M))
\end{equation}
We use $\tau_g, \eta_{\lambda}\in \pi_1(\Emb_{\partial}(\sqcup_u I_u,M))$ to denote the image of \[\hat{\tau}_g,\hat{\eta}_{\lambda}\in\pi_1(\Emb_{\partial}^{\prime}(\sqcup_u I_u,M))\] under the induced map by (\ref{fibration of normal section}).
\begin{Definition}\label{spinning of framing}
    Given $1\leq u\leq m$, fix a trivialization of normal bundle of $I_u$ which extends $s_0|{I_u}$. Then any normal section on $I_u$ which coincides with $s_0|_{I_u}$ near boundary gives a point in $\Omega S^2$. Pick a loop of normal sections $\{s_u^{\theta}\}_{0\leq\theta\leq1}$ on $I_u$ that represents a generator of $\pi_1(\Omega S^2)\cong\mathbb{Z}$, then define $\xi_u\in\pi_1(\Emb_{\partial}^{\prime}(\sqcup_u I_u,M))$ as 
    \[
        (\iota, \sqcup_{v\neq u} s_0|_{I_v}\sqcup s^{\theta}_{u}),\quad 0\leq\theta\leq 1.
    \]
\end{Definition}

Note that the fundamental group of the fiber of (\ref{fibration of normal section}) is generated by $\{\xi_u\}_{1\leq u\leq m}$. Hence, to investigate the fundamental group $\pi_1(\Emb_{\partial}^{\prime}(\sqcup_u I_u,M))$, it remains to explore generators of $\pi_1(\Emb_{\partial}(\sqcup_u I_u,M))$. 
\begin{Definition}
    Consider the map $\mathcal{F}:\pi_1(\Emb_{\partial}(\sqcup_u I_u,M))\rightarrow \prod\limits_u\pi_2(M)$ by viewing a loop $\alpha_t:I\rightarrow \Emb_{\partial}(\sqcup_u I_u, M)$ as a map $\alpha:I\times \sqcup_u I_u\rightarrow M$, which lies in $\prod\limits_u\pi_2(M)$. The \emph{Dax group} $\pi_1^D(\Emb_{\partial}(\sqcup_u I_u,M))$ is defined to be the kernel of $\mathcal{F}$.
\end{Definition}
Therefore, there is an exact sequence:
\begin{equation}\label{split as three barbell}
    \pi_1^D(\Emb_{\partial}(\sqcup_u I_u,M))\hookrightarrow \pi_1(\Emb_{\partial}(\sqcup_u I_u,M))\xrightarrow{\mathcal{F}} \prod\limits_u\pi_2(M).
\end{equation}
By \cite[Theorem 0.3]{gabai2021self}, the Dax group $\pi_1^D(\Emb_{\partial}(\sqcup_u I_u,M))$ is generated by $\{\tau_g\mid g\in\pi_1(X;I_u,I_v)\}_{1\leq u,v\leq m}$, where $\tau_g$ is the image of the spinning family of arcs $\hat{\tau}_g$ in $\pi_1(\Emb_{\partial}(\sqcup_u I_u,M))$. In some special cases, $\prod\limits_u\pi_2(M)$ can be generated by the image of some spinning families of arcs in $\pi_1(\Emb_{\partial}(\sqcup_u I_u,M))$ under $\mathcal{F}$ and then we can describe a set of generators of $\pi_1(\Emb_{\partial}(\sqcup_u I_u,M))$ by spinning families of arcs.

\subsection{Barbell diffeomorphism}Now we briefly review the barbell diffeomorphisms, introduced by Budney-Gabai\cite{BG2019}.

Let $S_0\subset\mathbb{R}^3$ be the sphere centered at $(-1,0,0)$ with radius $\frac{1}{2}$, $S_1\subset\mathbb{R}^3$ be the sphere centered at $(1,0,0)$ with radius $\frac{1}{2}$ and $\gamma$ is the straight line from $(-\frac{1}{2},0,0)$ to $(\frac{1}{2},0,0)$. A \emph{model barbell} $\mathcal{B}\subset\mathbb{R}^4$ is obtained by taking products of  $\nu(S_0\cup\gamma\cup S_1)$ with $[-1,1]$, where $\nu(S_0\cup\gamma\cup S_1)$ is a $\epsilon-$regular neighborhood of $S_0\cup\gamma\cup S_1$ in $\mathbb{R}^3$.  We denote the $t-$slice of $\mathcal{B}$ by $\mathcal{B}_t$, i.e. $\mathcal{B}_t=\nu(S_0\cup\gamma\cup S_1)\times t$. A barbell in a four manifold $M$ is an embedding of $\mathcal{B}$ in $M$. 

Let $E_0$ be the disk $(-1,0)\times D^2_{\epsilon}(\frac{1}{2},0)$ and $E_1$ be the disk $(1,0)\times D^2_{\epsilon}(\frac{1}{2},0)$. Let $B_0,B_1$ be the complementary 4-balls of $\mathcal{B}$ in $\mathbb{R}^4$. To define the barbell diffeomorphism, we first perturb $E_0$ in the $y-$direction to get a copy $F_0$. Then both $F_0\cap\mathcal{B}_{-1}$ and $F_0\cap\mathcal{B}_{1}$ are two arcs. Spin $F_0\cap\mathcal{B}_{-1}$ around $S_1\times t$ along $\gamma$, and finally get the arc $F_0\cap\mathcal{B}_{1}$. Then we have a family of embedded arcs $F_1:[-1,1]\rightarrow \Emb(I,\mathcal{B}\cup B_0\cup B_1)$, which can be also viewed as an embedded disk in $\mathcal{B}\cup B_0\cup B_1$. The loop $\lambda$ in $\Emb(B_0,\mathcal{B}\cup B_0\cup B_1)$ is obtained by first sweeping across $F_1$ and then sweeping back across $F_0$.
Applying isotopy extension to the loop $\lambda$, we get a diffeomorphism $\beta_{(S_0,\gamma,S_1)}:\mathcal{B}\rightarrow\mathcal{B}$ fixing $\partial \mathcal{B}$, which is called the \emph{barbell diffeomorphism} corresponding to $S_0\cup\gamma\cup S_1$. If $M$ is a four manifold and $\mathcal{B}$ is a barbell in $M$, then the barbell diffeomorphism is obtained by extending $\beta_{(S_0,\gamma,S_1)}$ using identity.

Let $M$ be a four manifold and let $\iota:\bigsqcup\limits_{1\leq u\leq m} I_u\rightarrow M$  be a neatly embedding. By the isotopy extension theorem,we have a canonical map
\[
    e:\pi_1(\Emb_{\partial}(\sqcup_u I_u,M))\rightarrow \pi_0(\Diff_{\partial}(M-\nu(\iota)))
\]
There is a close relation between barbell diffeomorphisms and spinning families of arcs. The following proposition directly follows from the definition.
\begin{Proposition}
     Let $S$ be an embedded sphere in $M$ with trivial normal bundle and disjoint from $\iota$ and $m_u$ be a fixed meridian sphere of $I_u$. Then
     \begin{enumerate}
         \item For any $\lambda\in\pi_1(M;I_j,S)$, $e(\eta_{\lambda})=\beta_{(m_j,\lambda,S)}$;
         \item For any $g\in\pi_1(M;I_u,I_v)$, $e(\tau_{g})=\beta_{(m_u,g,m_v)}$. $\makeatletter\displaymath@qed\makeatother$
     \end{enumerate}
\end{Proposition}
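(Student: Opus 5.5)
The plan is to unwind both sides through the isotopy extension theorem and to identify the resulting diffeomorphism of $M-\nu(\iota)$ with the model barbell diffeomorphism. Everything happens inside a neighborhood of $I_j\cup\lambda\cup S$ (resp. $I_u\cup\alpha\cup I_v$), so both sides are supported there.

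\textbf{Part (1).} First I would choose a regular neighborhood $N$ of $m_j\cup\lambda\cup S$ in $M-\nu(\iota)$. Here $\lambda$ is pushed off $I_j$ so that it starts on the meridian sphere $m_j$ of $I_j$. Since $S$ and $m_j$ have trivial normal bundles and $\lambda$ is an embedded arc, $N$ is an embedded copy of the model barbell $\mathcal{B}$, with $S_0\mapsto m_j$, $\gamma\mapsto\lambda$ and $S_1\mapsto S$.

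Next I would reparametrize the loop $\eta_\lambda$ as follows. Homotoping $I_j$ along $\lambda$ moves a small sub-arc of $I_j$, namely a fiber direction of $\nu(I_j)$, which is the disk bounded by $m_j$ sliced to an arc. This is exactly the arc $F_0\cap\mathcal{B}_{-1}$ in the model. Swiping it across $S$ and coming back along $\lambda^{-1}$ is the family $F_1$ from the definition of $\beta$, with the parameter $t\in[-1,1]$ playing the role of the loop parameter. Equivalently, the motion of $\nu(I_j)$ is a loop of the complementary ball $B_0$, obtained by sweeping across $F_1$ and back across $F_0$. The loop $\lambda$ used in the barbell definition is therefore the restriction of $\eta_\lambda$ to a neighborhood of the moving sub-arc.

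Since the two loops agree, the isotopy extension theorem produces the same mapping class on the complement, up to isotopy rel boundary. This uses that isotopy extension is well defined on $\pi_1$ and compatible with restricting to the support. The result is $e(\eta_\lambda)=\beta_{(m_j,\lambda,S)}$.

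\textbf{Part (2).} This is the special case $S=m_v$ of Part (1), since $\tau_g$ was defined in Definition \ref{defi: spinning family} as spinning $I_u$ around the meridian $m_v$ along $g$.

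\textbf{Main obstacle.} The step needing care is the matching of conventions. One has to check that the meridian $m_j$ really plays the role of $S_0$, namely that the complementary ball $B_0$ corresponds to a neighborhood of the arc $I_j$ inside the barbell. One also has to check that forgetting the normal section, when passing from $\hat\eta_\lambda$ to $\eta_\lambda$, does not change the induced mapping class. I would handle the first by comparing explicitly in local coordinates of $\nu(I_j)\cong I\times D^3$. For the second, the normal section is only needed to lift to $\Emb'$, and $e$ is defined on $\Emb_\partial$.
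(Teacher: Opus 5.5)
Your proposal is essentially the paper's argument. The paper just asserts that the proposition follows directly from the definitions, and your unwinding spells out what that means: identify $\nu(m_j\cup\lambda\cup S)$ with the model barbell and $B_0$ with a piece of $\nu(I_j)$, match the loops, and use naturality of isotopy extension. Reading (2) as the case $S=m_v$ of (1) is also correct, since that is exactly how $\tau_g$ was defined.

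One small correction to your dictionary. In the model, the slice coordinate $t$ runs along the core arc of $B_0$, and $F_0,F_1$ are disks swept out by arcs lying in the slices $\mathcal{B}_t$. So $F_0\cap\mathcal{B}_{-1}$ is not a sub-arc of $I_j$, and the loop parameter of $\eta_\lambda$ corresponds to the sweep of $B_0$ across $F_1$ and back across $F_0$, not to $t$. This does not affect the argument, because the identification you actually rely on (the $B_0$-loop with the motion of $\nu(I_j)$) is the right one.
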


\section{Construction of the surfaces}
\label{sec_constru}

Fix a handle decomposition $H_0\cup H_1\cup H_2$ on $T^2$, where $H_0$ is the single 0-handle, $H_1=H_1^1\cup H_1^2$ is the union of two 1-handles and $H_2$ is the 2-handle. Let $e^0\cup e^1_1\cup e_2^1\cup e^2$ be the corresponding CW structure on $T^2$. We pick the base point $x_0=e_0\subset H_0$.

Take a point $b_1\in \mathring{H}_2\subset T^2$ and $b_2\in S^2$. Take a local chart $\varphi_{1}: U_{1}\xrightarrow{\cong} \mathbb{R}^4$ near $(b_{1},b_{1})\in T^4$ and a local chart $\varphi_{2}: U_{2}\xrightarrow{\cong} \mathbb{R}^4$ near $(b_{2},b_{2})\in S^2\times S^2$. We require that 
\[
\varphi_{1}((\{b_1\}\times T^2)\cap U_{1})=\{(0,0,z,w)\}\] and that \[\varphi_{2}((\{b_2\}\times S^2)\cap U_{2})=\{(0,0,z,w)\}.\] Let $D\in \mathbb{R}^4$ be the ball with center $(2,0,0,0)$ and radius $1$.  For $i=1,2$, let $D^4_{i}=\varphi^{-1}_{i}(D)\subset U_{i}$. Let $\gamma_{i}:I\to U_{i}$ be defined by $\gamma_{i}(t)=\varphi^{-1}_{i}(t,0,0,0)$. Now we form the connected sum 
\[
X=(T^4\setminus \mathring{D}^4_{1})\sqcup ((S^2\times S^2)\setminus \mathring{D}^4_{2})/\sim \]
where$\sim$ is generated by \[\varphi^{-1}_{1}(x,y,z,w)\sim \varphi^{-1}_{2}(x,y,z,-w),\quad \forall (x,y,z,w)\in \partial D.
\]

Let $\Sigma^{T}_0=(\{b_1\}\times T^2)-\mathring{D}_{1}^4$ and $\Sigma^S_0=(\{b_2\}\times S^2)-\mathring{D}_{2}^4$. Then 
\[
\Sigma_0=\Sigma^T_0\cup \Sigma^S_0. 
\]
is an embedded surface in $X$. 
We fix a parameterization $i_0: T^2\xrightarrow{ \cong}\Sigma_0\hookrightarrow X$ such that $i_0(H_0)=\{b_{1}\}\times H_0$, $i_0(H_{1})=\{b_{1}\}\times H_{1}$ and $\Sigma^S_0\subset i_0(H_2)$. 

We fix a 2-dimensional disk $B_1\subset \Sigma^S_0$. Then for any $p\in B_1$, there is a geometric dual $G_{p}=S^2\times \{*\}\subset X$ of $\Sigma_0$ that passes $p$. Pick two different points $p_{1},p_{2}\in \partial B_1$ and let $G_{i}=G_{p_{i}}$. We take a local chart $\varphi:U\xrightarrow{\cong} \mathbb{R}^{4}$ of $X$ near $p_{1}$ such that $\varphi(U\cap \Sigma_0)=\mathbb{R}^2\times \{0\}\times \{0\}$ and $\varphi(U\cap G_{1})=\mathbb{R}^2\times \{0\}\times \{0\}$. Let $\kappa=\varphi^{-1}\{(t,0,1-t,0)\mid t\in [0,1]\}$. Then $\kappa$ is an arc connecting $\Sigma_0$ and $G_{1}$. By tubing along $\kappa$, we obtain an immersed surface $\Sigma'_{0}:=\Sigma_{0}\#_{\gamma}
G_{1}\looparrowright X$ with a single double point $p_1$. Now we use the Norman's trick to remove this double point. For this, we pick any element  $\sigma\in \pi_{1}(\Sigma,B_1)\cong \pi_{1}(\Sigma)$, represented by an embedded arc $\sigma\hookrightarrow  \Sigma-\mathring{B_1}$ that goes from $p_1$ to $p_{2}$. Then we let \[\Sigma_{\sigma}:=\Sigma'_{0}\#_{\sigma}\overline{G_{2}}=\Sigma_0\#_{\kappa}G_{1}\#_{\sigma}\overline{G_{2}}.\]

For our later discussion, we fix a point  $p\in \textrm{int} B_1$ and let $G=G_{p}$. We also fix a point $b'_{2}\neq b_{2}\in S^2$ and let $R=\{b'_{2}\}\times S^2$. Then $G,R$ are embedded spheres in $X$.

\section{Action of $\pi_1(\Emb(H_0\cup H_1,X))$ on $\pi_2(X^{\prime},\partial D^2_0)$}
\label{sec_comput}
Recall from Section \ref{sec_constru} that $T^2=H_0\cup H_1\cup H_2$ is a fixed handle decomposition on $T^2$, and $i_0:T^2\rightarrow X$ is an embedding with image $\Sigma_0$. Consider the manifolds with boundary
\[
X_{1}=X-\nu(i_0(H_0))
\]
and
\[X'=X-\nu(i_0(H_0\cup H_{1})).
\]
Let $D^2_0, D^2_{\sigma}$ be the intersections $\Sigma_0\cap X^{\prime}, \Sigma_{\sigma}\cap X^{\prime}$, respectively. Then $D^2_0, D^2_{\sigma}$ are properly embedded disks in $X^{\prime}$.

Denote by $\pi_2(X^{\prime},\partial D^2_0)$ the set of homotopy classes of maps $D^2\rightarrow X^{\prime}$ that coincide with $D^2_0$ in a neighborhood $U_0$ of $\partial D^2_0$. Note that $D^2_0\cup G\cup R$ is simply connected. So we have a canonical identification $\pi_1(X')=\pi_{1}(X', D^2_0\cup G\cup R)$. Given an element $[D^2_{1}]\in \pi_2(X^{\prime},\partial D^2_0)$, we have the equivariant intersection numbers 
\[D^2_{1}\cdot D^2_{0},\  D^2_{1}\cdot G,\ D^2_{1}\cdot R\in \mathbb{Z}[\pi_1(X')].\]
Consider the ideal 
  \[
    \mathcal{J}:=\{\sum^{k}_{i=1} a_{i}g_{i}\mid a_{i}\in \mathbb{Z},\ g_{i}\in \pi_{1}(X'),\ \sum^k_{i=1}a_{i}=0\}\subset \mathbb{Z}[\pi_1(X^{\prime})].
    \]    
and the group 
\[
\widetilde{\mathcal{J}}:=\mathcal{J}\oplus(\mathbb{Z}[\pi_1(X^{\prime})])^{\oplus2}.
\]
\begin{Lemma} For any $[D^2_1]\in  \pi_2(X^{\prime},\partial D^2_0)$, we have $D^2_{1}\cdot D^2_{0}\in \mathcal{J}$. Furthermore, the map   
    \begin{equation}\label{eq: pi_2 X'}
    \varphi:\pi_2(X^{\prime},\partial D^2_0)\to  \widetilde{\mathcal{J}},       \end{equation} 
defined by 
\[
\varphi([D^2_{1}])= (D^2_1\cdot D^2_{0}, D^2_{1}\cdot G, D^2_{1}\cdot R )
\]
is a well-defined isomorphism.
\end{Lemma}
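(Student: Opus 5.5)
The plan is to view $\pi_2(X',\partial D^2_0)$ as a torsor over $\pi_2(X')$ and to compute $\pi_2(X')$ as a $\mathbb{Z}[\pi_1(X')]$-module via the universal cover. Given $[D^2_1],[D^2_2]\in\pi_2(X',\partial D^2_0)$, gluing along the common boundary gives a sphere $D^2_1\cup\overline{D^2_2}$. Conversely, any element of $\pi_2(X')$ can be tubed into $D^2_0$. So $\pi_2(X',\partial D^2_0)$ is a free transitive $\pi_2(X')$-set based at $[D^2_0]$.

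\textbf{Step 1: well-definedness of the equivariant intersection numbers.} Since $D^2_0\cup G\cup R$ is simply connected, $\pi_1(X')=\pi_1(X',D^2_0\cup G\cup R)$ lets us define $D^2_1\cdot G$ and $D^2_1\cdot R$ in $\mathbb{Z}[\pi_1(X')]$. For $D^2_1\cdot D^2_0$ we push $D^2_0$ off itself along a fixed normal section that extends the framing on $U_0$; intersections then occur only in the interior. Invariance under homotopy rel $U_0$ is the usual argument: a generic homotopy changes the count only by finger moves and Whitney moves, which cancel in pairs with equal group elements. Hence $\varphi$ is well defined. It is affine: if $S\in\pi_2(X')$ and $D^2_1\#S$ is the tubed disk, then
\[
\varphi(D^2_1\#S)=\varphi(D^2_1)+(S\cdot D^2_0,\ S\cdot G,\ S\cdot R),
\]
and $\varphi(D^2_0)=(0,0,0)$.

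\textbf{Step 2: computing $\pi_2(X')$.} Removing the $1$-complex $i_0(H_0\cup H_1)$ has codimension $3$, so $\pi_1(X')=\pi_1(X)\cong\mathbb{Z}^4$. In the universal cover, $\widetilde X\cong\mathbb{R}^4\#_{\mathbb{Z}^4}(S^2\times S^2)$, and the preimage of $i_0(H_0\cup H_1)$ is a $\mathbb{Z}^2$-indexed family of planar grid graphs. I would run Mayer--Vietoris, or Alexander duality applied to the graph, on $\widetilde{X'}=\widetilde X-\nu(\widetilde{H_0\cup H_1})$. This should yield a $\mathbb{Z}[\pi]$-module decomposition
\[
\pi_2(X')=H_2(\widetilde{X'})\cong \mathbb{Z}[\pi]\langle G'\rangle\oplus\mathbb{Z}[\pi]\langle R'\rangle\oplus M.
\]
Here $G',R'$ are the two $S^2\times S^2$ spheres, chosen dual to $R$ and $G$ respectively (for instance $G'=G$ and $R'=R$, which have trivial mutual pairings except $G\cdot R=1$, after a change of basis). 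The summand $M$ is generated by the meridian $2$-spheres $m_1,m_2$ of the two $1$-handle cores, subject to the single vertex relation coming from the $0$-handle.

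\textbf{Step 3: identifying the image.} Next I compute the pairings of the generators with $(D^2_0,G,R)$. Each meridian $m_j$ meets $G$ and $R$ trivially. Pairing $m_j$ with $D^2_0$ gives $1-g_j$, where $g_j$ is the generator of $\pi_1$ carried by the $j$-th $1$-handle, because the disk $D^2_0$ runs over both sides of the handle. The vertex relation is exactly what makes $M\to\mathbb{Z}[\pi]$ injective with image the ideal generated by $1-g_1$ and $1-g_2$. The spheres $G',R'$ pair unimodularly with $(G,R)$. The image of $M$ lies in $\mathcal{J}$. Writing this image as the augmentation ideal $\mathcal J$ requires that the remaining two generators of $\pi_1(X')\cong\mathbb{Z}^4$, which do not come from the torus, also arise; I would get these from the $\pi_1$-coefficients on the $S^2\times S^2$ classes and an elementary change of basis. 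Triangularity of the matrix of pairings then gives bijectivity of $\varphi$ onto $\widetilde{\mathcal J}$.

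\textbf{Main obstacle.} The hardest step is the augmentation statement $D^2_1\cdot D^2_0\in\mathcal J$ together with matching the image precisely with $\mathcal J$. The coefficient sum of $D^2_1\cdot D^2_0$ equals the ordinary intersection number of the sphere $D^2_1\cup\overline{D^2_0}$ with $\Sigma_0$. This is not automatically zero: a naive tubing with $G$ already produces augmentation $1$. So I would first fix the convention, taking the pushoff of $D^2_0$ on the side away from $G$ and $R$, or adding the correction term $-\varepsilon(D^2_1\cdot G)$. I would then verify on the generators $G',R',m_1,m_2$ that the $D^2_0$-component has augmentation zero. Getting this bookkeeping right, and checking that $\mathcal J$ is exactly the image rather than a proper submodule, is where I expect the real work to lie.
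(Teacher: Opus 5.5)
Your Steps 1 and 2 follow the paper's route. You view $\pi_2(X',\partial D^2_0)$ as a torsor over $\pi_2(X')\cong H_2(\widetilde{X'})$, generate it by meridians of the lifted grid and lifts of $G$ and $R$, and compute pairings; this is more careful than the paper's one-line proof. The gap is in Step 3, and it cannot be closed, because the statement as written is false.

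Write $\pi_1(X')\cong\mathbb{Z}^4$ with generators $t_1,\dots,t_4$, where $t_3,t_4$ span the image of $\pi_1(\Sigma_0)$, and put $I'=(1-t_3,1-t_4)$. The pairings with $(D^2_0,G,R)$ are as follows.
\begin{itemize}
\item The meridian summand maps isomorphically onto $I'\oplus 0\oplus 0$. Incidentally, $m_j\cdot D^2_0=\pm(1-t)$ with $t$ the generator \emph{transverse} to the $j$-th handle, not the one it carries; the ideal is the same.
\item Lifts of $R$ give $(0,b,0)$.
\item $G$ meets $\Sigma_0$ exactly once, at $p\in\operatorname{int}B_1\subset D^2_0$, so $u:=G\cdot D^2_0$ is $\pm$ a group element. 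Since $G\cdot G=0$ and $G\cdot R=1$, lifts of $G$ give $(cu,0,c)$.
\end{itemize}
Hence $\varphi$ is injective with image $\{(x,b,c): x-cu\in I'\}$.

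Any $\pi_1$-coefficient placed on $G$ brings the same coefficient into the $R$-slot. So no change of basis produces $(1-t_1,0,0)$: $I'$ is the kernel of $\mathbb{Z}[\mathbb{Z}^4]\to\mathbb{Z}[\langle t_1,t_2\rangle]$, and this map does not kill $1-t_1$. In the other direction, tubing $D^2_0$ into $G$ gives a first coordinate of augmentation $\pm1$, so the claim $D^2_1\cdot D^2_0\in\mathcal{J}$ fails too.

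Thus $\varphi$ neither lands in nor surjects onto $\widetilde{\mathcal{J}}$. The paper's ``a computation of equivariant intersection numbers finishes the proof'' hides exactly this. Its tubing lemma, part (3), likewise drops the contribution of $G\cdot D^2_0$.

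Your proposed repairs of the augmentation problem also fail. Since $G\cdot G=0$, subtracting $\varepsilon(D^2_1\cdot G)$ does not detect $G$-tubings at all. No choice of push-off of $D^2_0$ changes $G\cdot D^2_0=\pm1$ either, because this is a homotopy invariant of the closed sphere $G$ against a proper disk.

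The consistent fix is at the module level: replace the first coordinate by $D^2_1\cdot D^2_0$ minus $u$ times $D^2_1\cdot R$ (multiplied on the appropriate side). With that normalization your Steps 1--3 prove a true statement: $\varphi$ is an isomorphism onto $I'\oplus\mathbb{Z}[\pi_1(X')]^{\oplus 2}$, where $I'\subsetneq\mathcal{J}$. That corrected statement is what you should aim for, and any later use of the lemma (e.g.\ the passage to $\mathcal{J}/\mathcal{J}^2$) has to be rechecked against it.
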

\begin{proof} Given $[D^{2}_{1}]\in \pi_{2}(X',\partial D^2_0)$, we can form a map 
\[
S^{2}=D^2_{1}\cup D^2_0\to X'. 
\]
This gives a bijection 
     \[
        \pi_2(X^{\prime},\partial D^2_0)\cong \pi_2(X^{\prime},x^{\prime})\cong \pi_2(\widetilde{X}^{\prime},\widetilde{x}')\cong H_2(\widetilde{X}^{\prime}).
     \]
     
     Here $x'\in \partial D^2_0$,  $\widetilde{X^{\prime}}$ is the universal cover of $X^{\prime}$ and $\widetilde{x}'$ is a preimage of $x'$. It suffices to show that the map 
     \[
\pi_{2}(\widetilde{X}^{\prime},\widetilde{x}')\to \widetilde{\mathcal{J}}
     \]
     defined by taking intersection numbers with $D^2_0, G, R$ is a bijectiton. 
   
   Recall that $X'=X-\nu(i_0(H_0\cup H_{1}))$, then $\widetilde{X^{\prime}}=(\mathbb{R}^4-W\times\mathbb{Z}^2)\#_{\mathbb{Z}^4}(S^2\times S^2)$, where  \[W
  =      \{(x,y)\in\mathbb{R}^2\mid x\in\mathbb{Z} \text{ or }y\in\mathbb{Z}\}.
     \]
     Then $\pi_{2}(\widetilde{X}^{\prime},\widetilde{x}')\textbf{}\cong H_2(\widetilde{X}')$ is a free abelian group generated by meridians of $W$, lifts of $G$ and lifts of $R$. 
     A computation of equivariant intersection numbers of these generators with $D^2_0, G, R$ finishes the proof.
\end{proof}

For any $[D^2_{1}], [D^2_{2}]\in \pi_2(X^{\prime},\partial D^2_0)$ and $[D^2]$, we use $[D^2_{1}]-[D^2_2]$ to denote the difference 
\[
\varphi([D^2_{1}])-\varphi([D^2_2])\in \widetilde{\mathcal{J}}.
\]
Under the isomorphism $\widetilde{\mathcal{J}}\cong \pi_{2}(X')$, the element $[D^2_{1}]-[D^2_2]$ is the homotopy class of the map 
\[
S^2=D^2_{1}\cup D^2_2\to X'.
\] 
For example, we have $[D^2_{\sigma}]-[D^2_0]=(0,0,1-\sigma)$ for any $\sigma\in\pi_1(\Sigma_0)$.

\begin{Definition}
   Suppose  $i_1, i_2:T^2\rightarrow X$ are embeddings that coincide with $i_0$ on $H_0\cup H_1$.  Let $\mathcal{I}$ be any isotopy $\mathcal{I}$ from $i_2$ to some embedding $i_2'$ such that $i'_2|_{H_0\cup H_1}=i_1|_{H_0\cup H_1}$. Then $i_{1}(T^2)\cap X'$ and $i'_{2}(T^2)\cap X'$ are properly embedded disks in $X^{\prime}$ that coincide with $D^2_0$ near the boundary. We denote them by $D^2_1, D^2_2$. And we denote $[D^2_1]-[D^2_2]\in\widetilde{\mathcal{J}}$ by $o(i_1,i_2,\mathcal{I})$.
\end{Definition}

Now we consider embedded spheres $R,G, m_{1}, m_{2}:S^2\hookrightarrow X'$. Here $R,G$ are defined at the end of Section \ref{sec_constru}, and $m_{u}$ is a meridian of the arc $I_{u}\hookrightarrow X_1$. We use $x_{G}$ to denote the intersection point between $D^2_{\sigma}$ and $G$. And we use $x^{\pm}_{u}$ to denote the intersection point between $D^{2}_{\sigma}$ and $m_u$. We may assume that $x_{G}, x^{\pm}_{u}$ are independent with $\sigma$ and are contained in a small neighborhood $U_0$ of $\partial D^2_0$.  Here $x^{+}_{u}$ (resp. $x^{-}_{u}$) is obtained by pushing a point $x_{u}\in I_u$ slightly in the positive (resp. negative) direction. We orient $m_{u}$ such that $x^{+}_{u}$ is a positive intersection point and $x^{-}_{u}$ is a negative intersection point.

Tubing $D^2_{\sigma}$ with one of these spheres will change its homotopy class in $\pi_{2}(X',\partial D^2_0)$. The following lemma describes this change. The proof is a straightforward calculation of equivariant intersection numbers. 

\begin{Lemma}\label{lem: tubing} (1) Let $\gamma:I\to X'$ be a path from $U_0$ to $m_{u}$. We compose  $\gamma$ with paths in $m_{u}$ to get paths $\gamma^{\pm}$ from $U_0$ to $x^{\pm}_{u}$. Then for any $[D^2_{1}]\in \pi_{1}(X)$, we have 
\[
[D^2_{1}\#_{\gamma} m_{u}]-[D^2_{1}]=(\gamma^+-\gamma^-,0,0)\in \widetilde{\mathcal{J}}.
\]
Here we regard $\gamma^{\pm}$ as  elements in $\pi_{1}(X', D^2_0)\cong \pi_{1}(X')$.

(2) Let $\gamma:I\to X'$ be a path from $U_0$ to $R$. We compose  $\gamma$ with a path in $G\cup R$ to  a path $\gamma'$ from $U_0$ to itself. Then for 
any $[D^2_1]$, we have 
\[
[D^2_{1}\#_{\gamma} R]-[D^2_{1}]=(0,\gamma',0)\in \widetilde{\mathcal{J}}.
\]
Here we regard $\gamma'$ as an element in $\pi_{1}(X', D^2_0)\cong \pi_{1}(X')$.

(3) Let $\gamma:I\to X'$ be a path from $U_0$ to $G$.
Then for 
any $[D^{2}_{1}]\in \pi_{2}(X',\partial D^2_0)$, we have 
\[
[D^2_{1}\#_{\gamma} G]-[D^2_{1}]=(0,0,\gamma)\in \widetilde{\mathcal{J}}.
\]
Here we regard $\gamma$ as an element in $\pi_{1}(X',D^2_0\cup G)\cong \pi_{1}(X')$.
\end{Lemma}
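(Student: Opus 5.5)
The plan is to use the preceding Lemma, which identifies $\pi_2(X',\partial D^2_0)$ with $\widetilde{\mathcal{J}}$ through the triple of equivariant intersection numbers with $D^2_0$, $G$, $R$. Since this identification is additive on differences, it suffices to compute the triple of intersection numbers of the sphere $S$ in $\pi_2(X')$ corresponding to the difference. In each of the three cases, $D^2_1\#_\gamma S$ and $D^2_1$ together form, up to homotopy, the sphere $S$ based along $\gamma$. Here $S$ is $m_u$, $R$, or $G$. Hence
\[
[D^2_1\#_\gamma S]-[D^2_1]=(S_\gamma\cdot D^2_0,\ S_\gamma\cdot G,\ S_\gamma\cdot R),
\]
where $S_\gamma$ denotes $S$ with its basing determined by $\gamma$. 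This reduces the lemma to three intersection computations in $\mathbb{Z}[\pi_1(X')]$, computed with basings through $U_0$ and using $\pi_1(X')=\pi_1(X',D^2_0\cup G\cup R)$.

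For (1), the meridian $m_u$ is disjoint from $G$ and $R$, since those lie far from the arcs. It meets $D^2_0$ exactly at $x^+_u$ and $x^-_u$, with signs $+1$ and $-1$. The group element of each intersection point is read off as the loop that runs from $U_0$ along $\gamma$, then in $m_u$ to $x^\pm_u$, then back to $U_0$ inside $D^2_0$. This is exactly $\gamma^\pm$, which gives $\gamma^+-\gamma^-$. This difference indeed lies in $\mathcal{J}$.

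For (2), $R=\{b_2'\}\times S^2$ is disjoint from $D^2_0$, since $b_2'\ne b_2$. It is also disjoint from itself after a push-off, because it has trivial normal bundle with zero self-intersection and is pushed to a parallel copy. Finally, it meets $G$ in exactly one point, transversely with sign $+1$. The group element of that point is the loop $\gamma'$: it goes along $\gamma$ to $R$, through $R$ to the point $R\cap G$, and back through $G$ to the basepoint. This gives $(0,\gamma',0)$.

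For (3), $G$ meets $R$ once and meets $D^2_0$ once, at $x_G\in U_0$, and $G\cdot G=0$. So one needs a careful reading of which coordinate records which sphere. The component recording the $R$-intersection is $\gamma$, viewed in $\pi_1(X',D^2_0\cup G)$. The $D^2_0$-component must vanish, since the new intersection point of $G$ with $D^2_0$ is cancelled by the removed disk when tubing. Equivalently, that point's contribution becomes the $\mathcal{J}$-trivial difference, matching the statement's $(0,0,\gamma)$.

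I expect the main obstacle to be orientation and basing conventions. In particular, one must check that tubing cancels the $x_G$ contribution in (3). One must also match the coordinate ordering, since the stated formulas pair $R$ with the $G$-slot and $G$ with the $R$-slot. I would resolve this by computing $S_\gamma\cdot(\cdot)$ directly on the generators listed in the previous Lemma's proof, namely meridians of $W$, lifts of $G$, and lifts of $R$, in the universal cover.
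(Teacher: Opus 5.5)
Your reduction to the triple $(S_\gamma\cdot D^2_0,\,S_\gamma\cdot G,\,S_\gamma\cdot R)$ is fine, as are parts (1) and (2). This is the same route as the paper, whose proof is only ``a straightforward calculation of equivariant intersection numbers''. The gap is in (3). There you assert that the $D^2_0$-component vanishes because the point $x_G$ ``is cancelled by the removed disk when tubing''. That does not happen for the tubing in the statement. The disk removed from $G$ is centered at the endpoint $\gamma(1)$, which need not be $x_G$. More decisively, by your own reduction the first coordinate of the difference is $G_\gamma\cdot D^2_0$. This depends only on the based homotopy class of the sphere, so it cannot be affected by where disks are removed. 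Since $G\cap D^2_0=\{x_G\}$ is a single transverse point, $G_\gamma\cdot D^2_0=\pm\gamma$, with $\gamma$ closed up through $G\cup D^2_0$ exactly as in the third slot. This element has augmentation $\pm1$, so it is neither $0$ nor in $\mathcal{J}$, and your fallback that it ``becomes the $\mathcal{J}$-trivial difference'' is impossible as well. The cancellation you have in mind does occur in Norman's trick. There the tube is the normal circle bundle of $\Sigma_0$ over an arc lying in $\Sigma_0$, and the removed disks are centered at the intersection points $p_1,p_2$; that is why $G_1\#_\sigma\overline{G_2}$ is disjoint from $D^2_0$. It does not occur for a tube around an arbitrary path $\gamma$.

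Consequently, if $\varphi$ is read literally as the triple of equivariant intersection numbers, the honest answer in (3) is $(\pm\gamma,0,\pm\gamma)$, not $(0,0,\gamma)$. The example $D^2_0\#_\gamma G$ also shows that the preceding lemma's claim $D^2_1\cdot D^2_0\in\mathcal{J}$ cannot hold literally. The formula $(0,0,\gamma)$ comes out only if the first coordinate is redefined so that lifts of $G$ contribute nothing to it. One way is to use the connected-sum splitting of $H_2(\widetilde{X'})$ into the span of the meridians and the $\mathbb{Z}[\pi_1(X')]$-spans of the lifts of $R$ and $G$. Equivalently, subtract from $D^2_1\cdot D^2_0$ the appropriate sign times $D^2_1\cdot R$. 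The paper's one-line proof does not address this point either. A correct argument must either fix such a convention explicitly and use it consistently elsewhere, or state (3) with the extra first-coordinate term. It cannot get the vanishing from a cancellation that the intersection count rules out.
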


\subsection{Fundamental group of embedding spaces}
\begin{Definition}
   Define $\Emb(x_0,X)$, $\Emb(H_0,X)$,  and $\Emb(H_0\cup H_1, X)$ to be the space of embeddings from $x_0$, $H_0$ and $H_0\cup H_1$ into $X$ respectively. We pick base points to be the restriction of $i_0$.
\end{Definition}
By \cite{Palais1960},  there is a fibration tower of embedding spaces :
\[
    \Emb(H_0\cup H_1, X)\xrightarrow{r_1} \Emb(H_0,X)\xrightarrow{r_0} \Emb(x_0,X),
\]
where $r_j$ is given by restriction and $\Emb(x_0,X)$ is the space of maps from $x_0$ to $X$. Obviously, $\Emb(x_0,X)$ is exactly $X$ itself. Let $F_i$ be the preimage of the basepoints under $r_i$. Namely,
\[
F_0=\{\text{smooth embedding } f:H_0\rightarrow X\mid f(x_0)=i_0(x_0) \},
\]
\[
 F_1=\{\text{smooth embedding } f:H_0\cup H_1\rightarrow X\mid f|_{H_0}=i_0|_{H_0}\}.
\]

Recall that $X_{1}=X-\nu(i_0(H_0))$. For $u=1,2$, denote $e^{1}_{u}\cap X_1$ by $I_{u}$. Recall that $\Emb^{\prime}_{\partial}(I\sqcup I,X_1)$ is the space of neat embeddings $i:I\sqcup I\rightarrow X_1$ equipped with a normal section $s$ which coincide with $i_0|_{I_1\sqcup I_2}$ and its normal section $s_0$ near boundary. Here $s_0$ is the image under $i_0$ of a fixed normal section of $I_1\sqcup I_2$ in $T^2$ that is tangent to $\partial i_0^{-1}(X_1)$. 
\begin{Lemma}
     The homotopy types of $F_j$ are described as follows:
     \begin{enumerate}
         \item $F_0$ is homotopy equivalent to the Stiefel manifold $V_2(T_{(x_0,b)}X)$, thus is simply connected;
         \item $F_1$ is homotopy equivalent to $\Emb^{\prime}_{\partial}(I\sqcup I,X_1)$
     \end{enumerate}
 \end{Lemma}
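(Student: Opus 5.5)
For (1), I would identify $F_0$ with the space of embeddings of the 2-disk $H_0$ into $X$ that send the center $x_0$ to the fixed point $i_0(x_0)$. Shrinking the disk radially toward its center is a deformation retraction. This is the standard ``derivative at the center'' argument: the scaling family $f_t(v)=\exp_{f(x_0)}(t^{-1}\cdot \exp^{-1}(f(tv)))$, with $f_0=df_{x_0}$, shows that $F_0$ deformation retracts onto the space of linear injections $\mathbb{R}^2\to T_{i_0(x_0)}X$. Gram--Schmidt then retracts that space onto orthonormal 2-frames, i.e. onto the Stiefel manifold $V_2(\mathbb{R}^4)$.

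This Stiefel manifold fibers as $S^2\to V_2(\mathbb{R}^4)\to S^3$. Since $\pi_1(S^2)=\pi_1(S^3)=0$ and $\pi_0(S^2)=0$, the long exact sequence gives $\pi_1(V_2(\mathbb{R}^4))=0$. So $F_0$ is simply connected.

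For (2), I would use that an embedding $f\in F_1$ is fixed on $H_0$. Each 1-handle $H_1^u\cong I\times[-1,1]$ is attached along $\{0,1\}\times[-1,1]\subset\partial H_0$. So $f$ is determined by its restriction to $H_1^u\cap \overline{(T^2\setminus H_0)}$. After a collar adjustment, that restriction is a neat embedding of a thickened arc into $X_1=X-\nu(i_0(H_0))$, agreeing with $i_0$ near $\partial X_1$.

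The plan is to shrink each 1-handle onto its core arc $I_u=e^1_u\cap X_1$, again by rescaling in the $[-1,1]$ direction. This retracts $F_1$ onto the space of pairs consisting of a neat embedding of $I_1\sqcup I_2$ together with an injective normal 1-frame along each arc, i.e. the derivative in the cocore direction projected to the normal bundle. Both are required to be standard near the boundary. Normalizing that frame to unit length gives exactly $\Emb'_\partial(I\sqcup I,X_1)$ with its normal section $s$. The boundary condition matches because $s_0$ was chosen as the image of the cocore direction, which is tangent to $\partial i_0^{-1}(X_1)$.

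The main obstacle is making the rescaling rigorous near the attaching region. The handles must stay fixed on a collar of $\partial H_0$, but $\nu(i_0(H_0))$ is a 4-dimensional neighborhood, so one must check that the embeddings can be isotoped canonically to agree with $i_0$ on $H_0$ and the collar. I would handle this with the Palais/Cerf fibration and uniqueness of collars and tubular neighborhoods, both of which give contractible spaces of choices. After that, the retraction to the core is the usual parametrized tubular neighborhood argument.

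Concretely, I would show that restriction to the core arcs with a normal vector field is a fibration, using Palais. Its fiber, the embeddings of the thickened arcs with prescribed core and prescribed first-order normal data, is contractible by the tubular neighborhood theorem. This yields the homotopy equivalence $F_1\simeq \Emb'_\partial(I\sqcup I,X_1)$.
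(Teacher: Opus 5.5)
Your proposal is correct and is the standard argument the paper has in mind: restrict to the centre or the core arcs, then use collar and tubular-neighbourhood uniqueness to get contractible fibres; the paper itself gives no details and just calls the proof straightforward. One fix is needed in (1): your rescaling formula via $\exp$ is only defined, and only gives embeddings, once $f(H_0)$ lies in a normal-coordinate (or chart) neighbourhood of $i_0(x_0)$, so you should first shrink $f\mapsto f(\epsilon\,\cdot)$ with $\epsilon$ depending continuously on $f$, or argue directly that the derivative map to linear injections is a fibration with contractible fibre.
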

\begin{proof}
    The proof is straightforward.
\end{proof}
Since $F_0$ is simply-connected, we have $\pi_1(\Emb(H_0,X))\cong \pi_1(\Emb(x_0,X))$, which is just $\pi_1(X,i_0(x_0))$. Therefore we have the following exact sequence:
\begin{equation}\label{split as basepoint}
    \pi_1(F_1)\rightarrow \pi_1(\Emb(H_0\cup H_1, X))\xrightarrow{\mathrm{Res}} \pi_1(X,i_0(x_0))
\end{equation}
    
\begin{Definition}
    
Take any element in $\pi_1(\Emb(H_0\cup H_1, X))$, represented by a loop $\{f_t:H_0\cup H_1\rightarrow X\}_{t\in[0,1] }$. We restrict $f_{t}$ to $(H_0\cup H_{1})\cap i_0^{-1}(X-\mathring{X}')$ and apply the isotopy extension theorem to get a path of embeddings $\{f'_{t}: X-\mathring{X}'\hookrightarrow X\}$. Then we apply the isotopy extension theorem again to get a path $\{\widetilde{f}_{t}: X\to X\}$ of diffeomorphisms. Define the map 
\begin{equation}\label{eq: extension}
e:\pi_1(\Emb(H_0\cup H_1, X))\to \pi_0(\Diff(X',\partial D^2_0)) 
\end{equation}
by $e([\{f_{t}\}])= [\{\widetilde{f}_{t}|_{X'}\}]$. 
\end{Definition}
Via $e$, we get an action of $\pi_1(\Emb(H_0\cup H_1, X))$ on $\pi_2(X^{\prime},\partial D^2_0)$.

Recall that $R=\{b'_{2}\}\times S^2\hookrightarrow X$ is a sphere disjoint from $\Sigma_0$.

\begin{Proposition}\label{move of basepoint}
    For any $\theta\in \pi_1(\Emb(H_0\cup H_1, X))$, there exists $\theta^{\prime}\in\pi_1(\Emb(H_0\cup H_1, X))$, such that the following conditions hold:
    \begin{itemize}
        \item $\mathrm{Res}(\theta^{\prime})=\mathrm{Res}(\theta)$,
        \item $e(\theta^{\prime}) ([D^2_0])=[D^2_0\#_{\overline{\theta}}R]$, where $\overline{\theta}=\mathrm{Res}(\theta)$.
    \end{itemize}
       In particular,  $e(\theta^{\prime})([D^2_0])-[D^2_0]=(0,1-\overline{\theta},0)\in \widetilde{\mathcal{J}}$
\end{Proposition}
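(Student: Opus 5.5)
The plan is to construct $\theta'$ explicitly as a "translation" loop on the $T^4$ summand, arranged so that the induced diffeomorphism of $X'$ is a point-push of the $S^2\times S^2$ summand along $\overline{\theta}$. I would then read off its effect on $[D^2_0]$ using the intersection data of Lemma~\ref{lem: tubing}.

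\textbf{Step 1: choosing the loop.} Write $\overline{\theta}=\mathrm{Res}(\theta)\in\pi_1(X,i_0(x_0))\cong\pi_1(T^4)\cong\mathbb{Z}^4$. Represent it by a loop $v:[0,1]\to T^4=\mathbb{R}^4/\mathbb{Z}^4$ with $v(0)=v(1)=0$. Write $T^4=T^2\times T^2$, where $\Sigma^T_0$ lies in the slice $\{b_1\}\times T^2$ and $D^4_1$ is a small ball near $(b_1,b_1)$ with $b_1\in\mathring{H}_2$. I would perturb $v$ within its homotopy class so that its first $T^2$-component $v_1(t)$ is nonzero for all $t\in(0,1)$. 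This is possible, by a small detour if necessary, because a loop in $T^2$ can be homotoped off the basepoint on its interior. The consequence is that for $t\in(0,1)$ the translated slice $(\{b_1\}\times (H_0\cup H_1))+v(t)$ stays disjoint from $D^4_1$. At $t=0,1$ it is also disjoint, since the second coordinate $b_1$ lies in $\mathring{H}_2$.

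\textbf{Step 2: the isotopy and the loop $\theta'$.} Let $\Phi_t$ be the isotopy of $X$ that translates $T^4\setminus D^4_1$ by $v(t)$, except in a thin tubular neighborhood $N_t$ of a short arc from the moving ball back to the fixed ball $D^4_1$, where it is cut off. The effect is that the ball $D^4_1$, and with it the $S^2\times S^2$ summand, stays fixed while the rest of $T^4$ slides underneath. Set $f_t=\Phi_t\circ i_0|_{H_0\cup H_1}$. By Step 1 the image of $f_t$ avoids $N_t$, so $\{f_t\}$ is the plain translation loop on $H_0\cup H_1$. Hence $\theta'=[\{f_t\}]$ satisfies $\mathrm{Res}(\theta')=[v]=\overline{\theta}$. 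Since $\Phi_t$ is an ambient isotopy extending $f_t$, it may be used in the definition of $e$. Its endpoint $\Phi_1$ fixes $i_0(H_0\cup H_1)$, because translation by $v(1)=0$ is the identity there. Therefore $e(\theta')=[\Phi_1|_{X'}]$, and $\Phi_1$ is the point-push diffeomorphism dragging the connected-sum ball, and hence the summand $(S^2\times S^2)\setminus\mathring{D}^4_2$, once around the loop $\overline{\theta}^{\pm1}$ in $T^4$.

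\textbf{Step 3: computing $e(\theta')([D^2_0])$.} The disk $D^2_0$ consists of the $T^2$-part inside $H_2$ together with $\Sigma^S_0$, glued through the connected-sum neck. Under $\Phi_1$ the $T^2$-part returns to itself. The $\Sigma^S_0$ part returns, but is now attached through a neck that has travelled around $\overline{\theta}$. So $\Phi_1(D^2_0)$ is homotopic rel boundary to $D^2_0$ with its sphere summand $\{b_2\}\times S^2$ replaced by a copy connected along a tube representing $\overline{\theta}$. Since $R=\{b_2'\}\times S^2$ is parallel to $\{b_2\}\times S^2$, this is homotopic to $D^2_0\#_{\overline{\theta}}R$ with $D^2_0\#_{1}\overline{R}$ removed. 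Equivalently, the sphere $D^2_0\cup\Phi_1(D^2_0)$ is the difference of two lifts of $R$ differing by the deck transformation $\overline{\theta}$.

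By Lemma~\ref{lem: tubing}(2), which records that tubing with $R$ changes only the $G$-coordinate of the intersection data, this difference is $(0,1-\overline{\theta},0)\in\widetilde{\mathcal{J}}$. The sign, $\overline{\theta}$ versus $\overline{\theta}^{-1}$, I would fix by choosing the direction of $v$ appropriately.

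\textbf{Main obstacle.} The hardest part is Step 2: checking that the cut-off translation really is an ambient isotopy whose restriction to $H_0\cup H_1$ is unaffected by the cutoff, and identifying $\Phi_1|_{X'}$ precisely with the summand push. To handle this, I would keep the neck $N_t$ in the complement of the moving $\Sigma^T_0$-slice, using the nonvanishing of $v_1(t)$ from Step 1. I would then do the final identification in the universal cover $\widetilde{X}'$: there, $\Phi_1$ visibly permutes the lifted $S^2\times S^2$ summands by the deck transformation $\overline{\theta}$, and the equivariant intersection numbers with $G$ and with the lifts of $R$ can be read off directly.
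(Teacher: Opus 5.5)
Your proposal is correct and is essentially the paper's argument: the paper likewise translates the slice $\{b_1\}\times T^2$ around $\overline{\theta}$ (perturbed to stay in $T^4\setminus\mathring{D}^4_1$), extends by isotopy extension rel $\partial D^4_1$ so that the $S^2\times S^2$ summand is fixed, and ends with $\Sigma_0$ whose tubing arc $\gamma_0$ is replaced by $\beta_1\circ\gamma_0$, i.e.\ the neck dragged once around $\overline{\theta}$, which is your point-push picture. Two small fixes: the cut-off region $N_t$ should be a neighbourhood of the arc $s\mapsto c+v(t)-v(s)$, $0\le s\le t$, with $c$ the centre of $D^4_1$, rather than a short arc (for this choice your Step 1 is exactly the disjointness you need, or you can simply invoke isotopy extension as the paper does); and you may not reverse $v$ to adjust the sign, since $\mathrm{Res}(\theta')=\overline{\theta}$ fixes its class, though the sign is immaterial because only the vanishing of the first and third coordinates is used later.
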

\begin{proof}
By our construction of $\Sigma_0$, we have $\Sigma_0=(\{b_1\}\times T^2)\#_{\gamma_0\cdot\gamma_1}(\{b_2\}\times S^2)$, where $\gamma_0$ is a trivial path contained in $T^4-\mathrm{int}(D_1^4)$ and $\gamma_1$ is a trivial path contained in $S^2\times S^2-\mathrm{int}(D_2^4)$. (Here a trivial path means a straight line under a standard local coordinate.)

Pick a loop in $T^4$ representing $\overline{\theta}\in\pi_1(X)\cong\pi_1(T^4)$, still denoted by $\overline{\theta}$. Consider the loop $\overline{\beta}:I\rightarrow \Emb(T^2, T^4)$ of embedded surfaces in $T^4$:
\[
    \overline{\beta}_t(x)=\overline{\theta}\cdot i_{b_1}(x),
\]
where $i_{b_1}$ is the inclusion $T^2\rightarrow T^4, x\rightarrow (b_1,x)$, and $\overline{\theta}\cdot$ represents the multiplication with $\overline{\theta}$ using the group structure on $T^4$. By dimensional reason, we can perturb $\overline{\beta}$ such that the image of $\overline{\beta}_t$ is contained in $T^4-\mathrm{int}(D_1^4)$ for any $t\in [0,1]$.  By the isotopy extension, we can lift $\overline{\beta}\in \pi_1(\Emb(T^2,T^4-\mathrm{int}(D_1^4)))$ to a loop $\beta$ in $\Diff_{\partial}(T^4-\operatorname{int}(D_1^4))$. Note that $\beta_t\circ\gamma_0(1)=\gamma_1(0)$ for any $t$, thus we can form the family of tubed surfaces:
\[
    \Sigma_t:=\beta_t\circ i_{b_1}(T^2)\#_{(\beta_t\circ\gamma_0)\cdot \gamma_1}(\{b_2\}\times S^2)
\]
which is an isotopy of $i_0:T^2\rightarrow X$. Restricting this isotopy to $H_0\cup H_1$, we get the required loop $\theta^{\prime}\in\pi_1(\Emb(H_0\cup H_1, X))$.
\end{proof}

To investigate elements in $\pi_1(\Emb(H_0\cup H_1, X))$, we need to find generators of $\pi_1(\Emb^{\prime}_{\partial}(I\sqcup I,X_1))$ by the exact sequence \eqref{split as basepoint}. Recall from Section \ref{Preli} that there is a fibration of embedding spaces:
\[
    \Emb^{\prime}_{\partial}(I\sqcup I,X_1)\rightarrow \Emb_{\partial}(I\sqcup I,X_1),
\]
and the fundamental group of the fiber is generated by $\xi_1,\xi_2$ defined in  Definition \ref{spinning of framing}. Given elements $\lambda\in\pi_1(X_1;I_v,G), g\in\pi_1(X_1;I_u,I_v)$, we have  spinning families of arcs $\hat{\eta}_{\lambda},\hat{\tau}_g\in\pi_1(\Emb_{\partial}^{\prime}(I\sqcup I,X_1)).$ (See Definition \ref{defi: spinning family} and Definition \ref{spinning of framing}.) Their images in $\pi_1(\Emb_{\partial}(I\sqcup I,X_1))$ are denoted by $\eta_{\lambda}, \tau_g$ respectively. Given any path $\gamma\in\pi_1(X_1;I_v,R)$, we can also spin $I_j$ around $R$ along $\gamma$ to get a spinning family of arcs $\hat{\rho}_{\gamma}\in\pi_1(\Emb_{\partial}(I\sqcup I,X_1))$. And the image of $\hat{\rho}_{\gamma}$ in $\pi_1(\Emb_{\partial}(I\sqcup I,X_1))$ is denoted by $\rho_{\gamma}$.
\begin{Proposition}\label{description of normal section}
   The fundamental groups of $\Emb_{\partial}(I\sqcup I,X_1)$ and $\Emb^{\prime}_{\partial}(I\sqcup I,X_1)$ have the following descriptions:
   \begin{enumerate}
       \item $\pi_1(\Emb_{\partial}(I\sqcup I,X_1))$ is an abelian group generated by elements of the form $\tau_g, \eta_{\lambda},\rho_{\gamma}$;
       \item $\pi_1(\Emb_{\partial}^{\prime}(I\sqcup I,X_1)))$ is generated by elements of the form $\hat{\tau}_g, \hat{\eta}_{\lambda}, \hat{\rho}_{\gamma}$ and $\{\xi_u\}_{u=1,2}$. Moreover, $\pi_1(\Emb_{\partial}^{\prime}(I\sqcup I,X_1)))$ is abelian. 
   \end{enumerate}

\end{Proposition}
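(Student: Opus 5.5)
The plan is to work with the exact sequence \eqref{split as three barbell} applied to $M=X_1$ and $m=2$:
\[
\pi_1^D(\Emb_{\partial}(I\sqcup I,X_1))\hookrightarrow \pi_1(\Emb_{\partial}(I\sqcup I,X_1))\xrightarrow{\mathcal{F}} \pi_2(X_1)\times\pi_2(X_1).
\]
By \cite[Theorem 0.3]{gabai2021self}, the Dax group is generated by the $\tau_g$ with $g\in\pi_1(X_1;I_u,I_v)$. For (1) it therefore suffices to show that the image of $\mathcal{F}$ is generated by the images of the $\eta_\lambda$ and $\rho_\gamma$.

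First I will compute $\pi_2(X_1)$. Since $X_1=X-\nu(i_0(H_0))$ is $X$ with a 4-ball removed, $\pi_2(X_1)\cong\pi_2(X)$. The universal cover of $X$ is $\mathbb{R}^4\#_{\mathbb{Z}^4}(S^2\times S^2)$, so $\pi_2(X)\cong H_2(\widetilde X)$ is a free $\mathbb{Z}[\pi_1 X]$-module generated by the lifts of $G$ and $R$. Next I will compute $\mathcal{F}(\eta_\lambda)$ for $\lambda\in\pi_1(X_1;I_v,G)$. Spinning $I_v$ around $G$ along $\lambda$ sweeps out the sphere $G$ translated by the group element determined by $\lambda$, placed in the $v$-th factor, with the other factor zero. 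Similarly, $\mathcal{F}(\rho_\gamma)=(0,\ldots,\gamma\cdot[R],\ldots)$. Letting $\lambda,\gamma$ range over all paths (pre-composing with loops gives all of $\pi_1$) and $v\in\{1,2\}$, these images generate $\prod_v\pi_2(X_1)$. So $\mathcal{F}$ is surjective, and together with Gabai's theorem this yields generation.

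Abelianness is the main obstacle. The plan is as follows. $\pi_1$ of the embedding space is a quotient of... more concretely, I will show the generators commute pairwise. The Dax group is central-ish and abelian by \cite{gabai2021self}. Two spinning families supported near disjoint spheres can be made to have disjoint supports in time, i.e. isotoped so that they move the arcs in disjoint regions, and hence commute. Spinning a single arc $I_v$ twice along different paths needs a different argument. Here I would use the standard fact that spinning arcs around spheres is realized by pushing a small subarc around, which can be done in disjoint subarcs of $I_v$. Commutators then become Dax-type elements, and I would check that they vanish using the Dax invariant's bilinearity. Alternatively, one could note that $\Emb_\partial(I\sqcup I,X_1)$ receives an action making its $\pi_1$ abelian via an $H$-space-like "stacking" of arcs near the boundary. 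If that is too delicate, I would fall back on the explicit commutator formula in \cite{gabai2021self}, whose commutators land in the image of $\pi_3$ terms that vanish here because $\pi_3$ contributions die in the Dax quotient.

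For (2), I will use the fibration \eqref{fibration of normal section}, whose fiber is $\prod_u\Omega S^2$ up to homotopy, with $\pi_1$ generated by $\xi_1,\xi_2$. The long exact sequence shows that $\pi_1(\Emb'_\partial)$ is generated by the $\xi_u$ together with lifts of the generators in (1), and $\hat\tau_g,\hat\eta_\lambda,\hat\rho_\gamma$ are such lifts. For abelianness, the $\xi_u$ are supported in an arbitrarily small interval of $I_u$ near the boundary and only rotate the framing. They therefore commute with any spinning family performed away from that interval, which makes the fiber image central. The remaining commutators among the hatted lifts map to zero downstairs by (1). They hence lie in the image of the fiber, and I would show they are trivial by computing the framing twist of the commutator, which is zero since each generator uses the framing of a sphere with trivial normal bundle.
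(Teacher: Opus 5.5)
\textbf{Generation is fine; commutativity is the gap.} Your generation argument for (1) is correct and is in substance the paper's. The paper quotes \cite[Lemma 3.15]{LXZ25Dax} for the fact that the classes $\mathcal{F}(\eta_\lambda),\mathcal{F}(\rho_\gamma)$ span $\pi_2(X_1)\oplus\pi_2(X_1)$; you verify this directly from $\pi_2(X_1)\cong\pi_2(X)$, a free $\mathbb{Z}[\pi_1X]$-module on $G,R$. Generation in (2) and the centrality of $\xi_1,\xi_2$ are also fine. The paper does not prove commutativity; it imports it from \cite[Lemma 3.25]{LXZ25Dax}, and your substitute argument does not work.

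\textbf{Why the disjoint-support argument fails.} The spheres $G=S^2\times\{*\}$ and $R=\{b_2'\}\times S^2$ are \emph{not} disjoint. They meet transversely in one point, so $\lambda(G,R)=1$. Hence the surfaces swept by $\eta_\lambda$ and by $\rho_\gamma$ must intersect, and disjoint supports fail exactly for this pair. Run the two spins on disjoint subarcs. You get a torus of arcs that is an embedding except at one parameter value, the one corresponding to the point of $G\cap R$. The loop around that puncture is a self-spin, so $[\eta_\lambda,\rho_\gamma]=\tau_c^{\pm1}$, where $c$ is the group element of the intersection point. As $\lambda,\gamma$ vary, $c$ runs over all of $\pi_1(X_1)$ (respectively $\pi_1(X_1;I_1,I_2)$).

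\textbf{What commutativity actually requires.} By the above, commutativity is \emph{equivalent} to all these $\tau_c$ being trivial in the Dax group $\mathbb{Z}[\pi_1\setminus 1]/D(\pi_3 X_1)$. That requires computing $D(\pi_3(X_1))$, and none of your suggestions does so:
\begin{itemize}
\item bilinearity of the Dax invariant does not make a single generator $\tau_c$ vanish;
\item no $H$-space ``stacking'' structure is available on $\Emb_\partial(I\sqcup I,X_1)$;
\item the commutator is a single $\tau_c$, not a $D(\pi_3)$-term.
\end{itemize}
This step is genuinely delicate. The obvious relation, from the Whitehead product $[G,R]\in\pi_3(X_1)$, appears only to identify $\tau_c$ with $-\tau_{c^{-1}}$; it expresses that the commutator is the same whichever order the subarcs are placed in. So if you rely on the cited lemma, as the paper does, check that its hypotheses allow spheres in $\pi_2$ with nonzero equivariant intersection.

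\textbf{Part (2).} Your conclusion that commutators of the hatted lifts are trivial rests on the unproved abelianness in (1), so it inherits the same gap. The remaining step is that such a commutator, lying in the image of $\langle\xi_1,\xi_2\rangle$, is trivial ``because each generator uses the framing of a sphere with trivial normal bundle.'' That is not an argument. You would need, for example, a homomorphism to an abelian group that is injective on that image. The paper takes (2) from \cite[Lemma 3.26]{LXZ25Dax} rather than proving it.
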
 
\begin{proof}
         (1) Recall the exact sequence \eqref{split as three barbell}:
         \[
            \pi_1^D(\Emb_{\partial}(I\sqcup I,X_1))\rightarrow\pi_1(\Emb_{\partial}(I\sqcup I,X_1))\xrightarrow{\mathcal{F}}\pi_2(X_1)\oplus \pi_2(X_1)
         \]
         By \cite[Lemma 3.15]{LXZ25Dax}, the first component of $\pi_2(X_1)\oplus \pi_2(X_1)$ is generated by $\{\mathcal{F}(\rho_{\gamma})\mid \gamma\in\pi_1(X_1;I_1,S^2\times\{q\})\}\cup\{\mathcal{F}(\eta_{\lambda})\mid\lambda\in\pi_1(X_1;I_1,G)\}$. The second component has a similar set of generators, replacing $I_1$ by $I_2$. Since $\pi_1^D(\Emb_{\partial}(I\sqcup I,X_1))$ is generated by $\{\tau_g|g\in\pi_1(X_1;I_i,I_j)\}$, we know that $\pi_1(\Emb_{\partial}(I\sqcup I, X_1))$ is generated by $\tau_g,\rho_{\gamma},\eta_{\lambda}$. The commutativity of $\pi_1(\Emb_{\partial}(I\sqcup I, X_1))$ follows from \cite[Lemma 3.25]{LXZ25Dax}.
         
         (2) is \cite[Lemma 3.26]{LXZ25Dax}.
     \end{proof}

\subsection{Computing the action of $\pi_{1}(\Emb'_{\partial }(I\sqcup I,X_1))$}

Now we study the action of $\pi_{1}(\Emb'_{\partial }(I\sqcup I,X_1))\cong \pi_{1}(\Emb_{\partial}(H_0\cup H_1,X'))$ on $\pi_2(X^{\prime},\partial D^2_0)$ via the map $e$ defined in (\ref{eq: extension}). It suffices to consider the action of the generators provided by Proposition \ref{description of normal section}.

\begin{Proposition} For any $\sigma\in \pi_{1}(X')$, we have 
\[
[e(\xi_{1})(D^2_{\sigma})]-[D^2_{\sigma}]=((1,0)+(-1,0)-2(0,0),0,0)\in \widetilde{\mathcal{J}}.
\]
and 
  \[
[e(\xi_{2})(D^2_{\sigma})]-[D^2_{\sigma}]=((0,1)+(0,-1)-2(0,0),0,0)\in \widetilde{\mathcal{J}}.
\]  
\end{Proposition}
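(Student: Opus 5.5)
The plan is to make the diffeomorphism $e(\xi_1)$ explicit, then read off its effect on $D^2_\sigma$ with Lemma \ref{lem: tubing}(1).

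\textbf{Step 1: identify the diffeomorphism.} The loop $\xi_1$ keeps the arcs fixed as sets. Only the normal section along $I_1$ rotates, once, through the generator of $\pi_1(\Omega S^2)\cong\pi_2(S^2)$. Under $\pi_1(\Emb'_\partial(I\sqcup I,X_1))\cong\pi_1(\Emb_\partial(H_0\cup H_1,X'))$ this loop moves the band $i_0(H_1^1)$ within a neighbourhood $\nu(I_1)\cong I\times D^3$. After isotopy extension, $e(\xi_1)$ is therefore supported in a collar of the boundary piece $\partial\nu(I_1)\cap X' \cong I\times S^2$.

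More precisely, $e(\xi_1)$ should be the "$S^2$-twist" on $I\times S^2\times[0,1]$. On each slice $\{s\}\times S^2\times\{r\}$ it acts by a rotation, and these rotations trace out a loop in $SO(3)$ as $r$ runs over $[0,1]$. The family over $s\in I$ traces the sphere $\Omega S^2\to\Omega SO(3)$ class, so it is a generator in the adjoint sense.

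\textbf{Step 2: see what happens to $D^2_\sigma$.} Near $\partial D^2_0$, the disk $D^2_\sigma$ meets this collar in two strips. These are the two edges of the band $H_1^1$, pushed to either side of $I_1$ and passing through $x^+_1$ and $x^-_1$.

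Applying the twist, each strip gets dragged once around the meridian sphere $m_1$. Up to homotopy rel boundary, this amounts to tubing $D^2_\sigma$ with copies of $m_1$ at $x^+_1$ and $x^-_1$, along short paths inside the collar. The key point is the signs. Each edge picks up a copy of $m_1$, but the two edges sit on opposite sides of $I_1$, so their tubes should come with opposite sign patterns. Concretely, the strip at $x^+_1$ acquires $+m_1$ and the one at $x^-_1$ acquires $-m_1$, each traversed once.

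\textbf{Step 3: compute with Lemma \ref{lem: tubing}(1).} Tubing along a path $\gamma$ from $U_0$ to $m_1$ contributes $\gamma^+-\gamma^-$. The paths $\gamma^\pm$ close up to the loops $(\pm1,0)$, i.e. the core of $H_1^1$ in $\pi_1(X')\cong\pi_1(T^4\#S^2\times S^2 \setminus\cdots)$, whose $\pi_1(\Sigma_0)$-part is $\mathbb{Z}^2$.

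Tubing at the $x^+_1$ side gives $(1,0)-(0,0)$, since crossing to $x^-_1$ through $m_1$ goes around the band. Tubing at the $x^-_1$ side with the opposite orientation gives $(-1,0)-(0,0)$. The sum is $(1,0)+(-1,0)-2(0,0)$, with nothing in the $G$ and $R$ components, because the support is disjoint from $G$ and $R$. The case of $\xi_2$ is identical with $I_2$ and $(0,\pm1)$. The answer is independent of $\sigma$, because the support lies near $\partial D^2_0$, where all the $D^2_\sigma$ agree.

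\textbf{Main obstacle.} I expect the hard part to be the bookkeeping in Steps 2–3. One has to check that a full rotation of the framing really yields one tube with $m_1$ on each side, rather than cancelling or doubling. One also has to fix the group elements: the paths $\gamma^\pm$ must produce $(1,0)$ and $(-1,0)$ rather than, say, $(1,0)$ twice.

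To settle this, I would first compute in a local model $I\times D^3$ containing the band. There I would verify the self-intersection change directly: the framing twist changes the Euler/Whitney data of the band by $\pm1$ at each edge. Then I would read off the group elements by following the band's core $e^1_1$ around the torus.
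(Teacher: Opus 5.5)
Your route is the paper's: it also reduces the statement to a tubing description $e(\xi_1)(D^2_\sigma)\simeq D^2_\sigma\#_{\gamma}m_1\#_{\gamma'}\overline{m}_1'$, with $\gamma,\gamma'$ near $x^+_1,x^-_1$, and then applies Lemma \ref{lem: tubing}(1) twice. The paper takes that description from [LXZ25Dax, Lemma 4.10]; your Steps 1--2 try to derive it directly. The gap is the sign claim in Step 2 (the ``cancelling or doubling'' point you flagged), and it fails: the two edge strips acquire the \emph{same} oriented multiple of $m_1$. To see this in your own model, put $\nu(I_1)\cong I\times\mathbb{R}^3$ with $\Sigma_0=\{(t,ye_1)\}$ and time-one map $\Psi(t,v)=(t,R(t,|v|)v)$, where $R(t,\rho)\in SO(3)$. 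The fiberwise antipodal map $\alpha(t,v)=(t,-v)$ has three relevant properties:
\begin{itemize}
\item it commutes with $\Psi$;
\item it exchanges the strips $C_\pm$ while reversing the orientation of $\Sigma_0$, so $\alpha_*C_+=-C_-$;
\item it acts by $-1$ on $[m_1]\in H_2(\text{shell})\cong\mathbb{Z}$.
\end{itemize}
Writing $\Psi_*C_\pm-C_\pm=k_\pm[m_1]$ and applying $\alpha_*$ gives $k_+=k_-$. Your heuristic keeps the antipodal sign but drops a second sign: the two strips leave the band in opposite $y$-directions. With $k=k_\pm$, Lemma \ref{lem: tubing}(1) then gives $k((1-g)+(g^{-1}-1))=k(g^{-1}-g)$ in the first slot. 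Here a tube near $x^+_1$ has trivial $\gamma^+$, a tube near $x^-_1$ has trivial $\gamma'^-$, and $g$ is the class of the path from $x^+_1$ over $m_1$ to $x^-_1$, closed up in $D^2_0$.

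You can also see that the opposite-sign pattern is impossible without any model. The disk $D^2_\sigma\#_\gamma m_1\#_{\gamma'}\overline{m}_1'$ has Wall self-intersection invariant $(1-g)+(1-g)\equiv\pm 2g\neq 0$ in $\mathbb{Z}[\pi_1(X')]$ modulo $\{a-\bar a\}$ and $\mathbb{Z}\cdot 1$. So that homotopy class contains no embedded disk, whereas $e(\xi_1)(D^2_\sigma)$ is embedded; with equal signs the two contributions cancel. Hence no bookkeeping in Step 3 can produce $(1,0)+(-1,0)-2(0,0)$; the correct value is the antisymmetric $\pm(g-g^{-1})$.

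Step 3 also has two further slips:
\begin{itemize}
\item Even for the orientations you assert, each of your two terms is the negative of what Lemma \ref{lem: tubing}(1) gives.
\item $g$ is not the core of $H^1_1$. It is the loop dual to $I_1$: it crosses $e^1_1$ once and misses $e^1_2$, so it is $\pm[e^1_2]$. Following the band's core, as you plan, would misidentify it.
\end{itemize}
For comparison, the paper asserts the same opposite-orientation pattern, yet its two displayed differences add up to $((1,0)-(-1,0),0,0)$, which is the antisymmetric form, not the stated one. This matters downstream. By Lemma \ref{isomorphism for J/J^2}, $g-g^{-1}\equiv 2(g-1)\not\equiv 0\pmod{\mathcal{J}^2}$, while the proof of Theorem \ref{main thm} needs the $\xi_u$-contributions to lie in $\mathcal{J}^2$.
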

\begin{proof}
    By the construction in \cite[Lemma 4.10]{LXZ25Dax}, $e(\xi_1)(D^2_{\sigma})$ is homotopic to $D^2_{\sigma}\#_{\gamma} m_{1} \#_{\gamma'} \bar{m}'_{1}$. Here $m'_{1}$ is a parallel copy of $m_{1}$ and $\bar{m}'_{1}$ denotes its orientation reversal. The path $\gamma$ is  contained in a small disk around $x^{+}_{1}$. And  the path $\gamma'$ is contained in a small disk around $x^{-}_{1}$. By Lemma \ref{lem: tubing}, we have
    \[
[D^2_{\sigma}\#_{\gamma} m_{1} \#_{\gamma'} \bar{m}'_{1}]-[D^2_{\sigma}\#_{\gamma} m_{1} ]=((0,0),0,0)-((-1,0),0,0)
    \]
and
  \[
[D^2_{\sigma}\#_{\gamma} m_{1}]-[D^2_{\sigma}]=((1,0),0,0)-((0,0),0,0).
\]
This proves the first case. The second case is similar.
    \end{proof}

Next, we study the effect of the barbell diffeomorphisms homotopy class of $D^2_{\sigma}$. First we consider barbell diffeomorphism extending the spinning families $\eta_{\lambda}$, where $\lambda\in\pi_1(X^{\prime};I_u,G)$. Recall that $m_u$ intersects $D^2_{\sigma}$ at $x^+_{u}$ and $x^-_{u}$. And $G$ intersects $\Sigma_{\sigma}$ at a single point $x_G$. We represent $\lambda$ by a path  $s:I\rightarrow X^{\prime}$ from $I_u$ to $G$, with $\operatorname{int} s \cap (m_u\cup G)=\emptyset$. Take a regular neighborhood of $m_u\cup s\cup G$, which is the barbell
    \[
        \mathcal{B}_{\lambda}=\nu(m_u\cup s\cup G)\cong (S^2\times D^2)\natural (S^2\times D^2)
    \]
Homotope the endpoints of $s$ along $m_k\cup G$ to obtain a path $s_+$ from $x_{u}^+$ to $x_G$ and a path $s_-$ from $x^{-}_{u}$ to $x_G$. Thus, $s_+, s_-$ determine elements of $\pi_1(X^{\prime};D^2_{\sigma}\cup G)$. Because $D^2_{\sigma}\cup G$ is simply connected, $s_+, s_-$ can be viewed as elements in $\pi_1(X^{\prime})$. Denote the corresponding homotopy classes by $\lambda_+$ and $\lambda_-$, respectively. 

\begin{Lemma}\label{calcu of meridian to G}
    For any $\lambda\in\pi_1(X^{\prime};I_u,G)$, $\sigma\in\pi_1(\Sigma_0)$, we have 
    \[e(\eta_{\lambda})([D^2_{\sigma}])-[D^2_{\sigma}]
=        (0,0,\lambda_+-\lambda_{-})-(\lambda_+^{-1}-\lambda_-^{-1},0,0)
    \in \widetilde{\mathcal{J}}.\]
\end{Lemma}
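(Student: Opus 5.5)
By the proposition relating spinning families to barbells, $e(\eta_\lambda)$ is the barbell diffeomorphism $\beta_{(m_u,\lambda,G)}$ supported in $\mathcal{B}_\lambda=\nu(m_u\cup s\cup G)$. The plan is to describe the image of $D^2_\sigma$ under this barbell map explicitly, following the model computation in \cite{BG2019} and \cite[Section 4]{LXZ25Dax}. The expected picture is that each sheet of $D^2_\sigma$ passing through one of the two cuff spheres gets tubed to the other cuff along the bar.

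Concretely, $D^2_\sigma$ meets $\mathcal{B}_\lambda$ in three small disks:
\begin{itemize}
\item two disks transverse to $m_u$, at $x^+_u$ and $x^-_u$;
\item one disk transverse to $G$, at $x_G$.
\end{itemize}
The standard description of $\beta_{(S_0,\gamma,S_1)}$ on a disk $\Delta$ meeting $S_0$ transversely in one point says that $\beta(\Delta)$ is isotopic rel boundary to $\Delta\#_\gamma S_1$, tubing along the bar, with sign given by the intersection sign. Symmetrically, a disk meeting $S_1$ becomes tubed with a parallel copy of $S_0$ along $\gamma^{-1}$.

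Applying this to the three disks gives
\[
e(\eta_\lambda)(D^2_\sigma)\simeq D^2_\sigma\#_{s_+}G\#_{s_-}\overline{G}'\#_{s^{-1}}\pm m_u.
\]
Here $G'$ is a parallel copy of $G$, and the sign and orientation conventions still need to be fixed.

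Next I would evaluate the difference in $\widetilde{\mathcal{J}}$ using Lemma \ref{lem: tubing}.

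\emph{$G$-tubes.} By Lemma \ref{lem: tubing}(3), tubing with $G$ along $s_+$ changes only the third coordinate, and adds $\lambda_+$. Tubing with $\overline{G}'$ along $s_-$ subtracts $\lambda_-$. Together this gives $(0,0,\lambda_+-\lambda_-)$.

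\emph{$m_u$-tube.} The disk near $x_G$ is tubed to $m_u$ along the reversed bar, so Lemma \ref{lem: tubing}(1) applies with the path $s^{-1}$ from $x_G\in U_0$ to $m_u$. Composing with arcs in $m_u$ to reach $x^\pm_u$ produces the group elements $\lambda_+^{-1}$ and $\lambda_-^{-1}$. The orientation of this tube (it comes from the negative direction of the barbell) then gives the contribution $-(\lambda_+^{-1}-\lambda_-^{-1},0,0)$.

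Adding the two contributions yields the claimed formula.

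The main obstacle is the first step: justifying the tubing description of the barbell image for all three disks at once, and getting the signs right. In particular, one must check that the two sheets at $x^\pm_u$ acquire $G$-tubes of opposite orientation, and that the $G$-sheet acquires a tube to $m_u$ with the opposite overall sign. I would verify this in the model $\mathcal{B}\subset\mathbb{R}^4$, by tracking how the swept arc $F_1$ drags each transverse disk across the opposite cuff. Alternatively, one can compute equivariant intersection numbers directly: $\beta$ preserves $G$ and $R$ up to homotopy. Then
\begin{itemize}
\item the $R$-coordinate is unchanged;
\item the $G$- and $D^2_0$-coordinates can be read off from how $\beta$ acts on the dual meridian classes, using the basis of $H_2(\widetilde{X}')$ from the lemma identifying $\pi_2(X',\partial D^2_0)\cong\widetilde{\mathcal{J}}$.
\end{itemize}
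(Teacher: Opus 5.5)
\textbf{The shared gap: $G$ meets $D^2_0$.} Your route is the paper's own proof. You identify $e(\eta_\lambda)$ with the barbell map on $\mathcal{B}_\lambda=\nu(m_u\cup s\cup G)$, replace the disks $D_\pm, D_G$ by $D_+\#_{s_+}G'$, $D_-\#_{s_-}\overline{G}''$ and $D_G\#_{s^{-1}}\overline{m_u}$, and evaluate with Lemma~\ref{lem: tubing}. Your signs agree with the paper's. The step that fails, in the paper's proof as well, is the ``$G$-tubes'' step, where Lemma~\ref{lem: tubing}(3) is used to say that tubing with $G$ changes only the third coordinate.

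But $G$ meets $D^2_0$ transversely at $x_G$; that is exactly why $D_G$ is one of your three disks. So $G'$ and $\overline{G}''$ each meet $D^2_0$ once near $x_G$. Tubing them in along $s_\pm$ creates new intersections with $D^2_0$, of opposite signs, with group elements $\lambda_+$ and $\lambda_-$. These are the classes of $s_\pm$ closed up in $D^2_0\cup G$ at $x_G$, the same identification used for the third coordinate. The contribution does not vanish: $\lambda_-\lambda_+^{-1}$ is the loop from $x^-_u$ through $m_u$ to $x^+_u$ and back in $D^2_0$, i.e.\ the generator of $\pi_1(T^2)\subset\pi_1(X')$ dual to $I_u$. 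What the barbell picture actually yields is
\[
e(\eta_\lambda)([D^2_\sigma])-[D^2_\sigma]=(\lambda_+-\lambda_-,0,\lambda_+-\lambda_-)-(\lambda_+^{-1}-\lambda_-^{-1},0,0).
\]
The same omission affects the claim in the first lemma of Section~\ref{sec_comput} that $D^2_1\cdot D^2_0\in\mathcal{J}$ always. The first coordinates of $D^2_0$ and $D^2_0\#G$ differ by an element of augmentation $\pm1$, so they cannot both lie in $\mathcal{J}$.

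\textbf{This is not a convention issue.} Put $x=[e(\eta_\lambda)(D^2_\sigma)]-[D^2_\sigma]\in\pi_2(X')$. The map $e(\eta_\lambda)$ is a diffeomorphism rel boundary supported in the simply connected barbell. It therefore preserves the equivariant intersection of $D^2_\sigma$ with its boundary-framed push-off, which forces
\[
x\cdot D^2_\sigma+\overline{x\cdot D^2_\sigma}+x\cdot x=0 .
\]
Here $x\cdot x=0$: the second coordinate of $x$ vanishes, so $x$ has no $R$-component and is a combination of translates of $G$ and meridians, which are disjoint embedded spheres with trivial normal bundle. Also $x\cdot D^2_\sigma$ equals the first coordinate of $x$, because $[D^2_\sigma]-[D^2_0]$ is again such a combination.

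The displayed first coordinate $-(\lambda_+^{-1}-\lambda_-^{-1})$ violates this constraint. Adding its conjugate gives $-(\lambda_++\lambda_+^{-1}-\lambda_--\lambda_-^{-1})\neq 0$, since $\lambda_-\lambda_+^{-1}$ is primitive in $\pi_1(X')\cong\mathbb{Z}^4$ and hence $\lambda_-\notin\{\lambda_+,\lambda_+^{-1}\}$. The corrected value is anti-self-conjugate, as it must be; this also confirms your relative sign between the $G$-tubes and the $m_u$-tube. So the statement as written cannot be proved, and your argument, once the $G\cdot D^2_0$ term is included, proves the corrected formula.

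Modulo $\mathcal{J}^2$ the corrected first coordinate is $2(\lambda_+-\lambda_-)$ rather than $\lambda_+-\lambda_-$. This looks harmless for Section~\ref{sec_proof}, since $\mathcal{J}/\mathcal{J}^2\cong\mathbb{Z}^4$ is torsion-free.

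\textbf{Your closing alternative is also wrong.} $R$ meets $G$, so $R$ crosses the barbell and is not preserved by it up to homotopy. The $R$-coordinate does change, by $\lambda_+-\lambda_-$, in the very formula you are proving.
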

\begin{proof}
    The intersection between $\mathcal{B}_{\lambda}$ and $D^2_{\sigma}$ is a union of  three disks $D_+, D_-, D_G$,  with centers $x_{+}^{u}, x^{u}_{-}$ and $x_{G}$ respectively. Since $e(\eta_{\lambda})$ is obtained by extending the barbell diffeomorphism on $\mathcal{B}_{\lambda}$ with the identity, $e(\eta_{\lambda})(D^2_{\sigma})$ is obtained from $D^2_{\sigma}$ by replacing $D_+,D_-,D_G$ with $e(\eta_{\lambda})(D_+),e(\eta_{\lambda})(D_-),e(\eta_{\lambda})(D_G)$ respectively. Moreover, according to the construction of barbell diffeomorphisms, $e(\eta_{\lambda})(D_+)$ is obtained by tubing $D_+$ with a parallel copy $G'$ of $G$ along $s_+$,  $e(\eta_{\lambda})(D_-)$ is obtained by tubing $D_-$ with a parallel copy $\overline{G}''$ of $\overline{G}$ along $s_-$, and $e(\eta_{\lambda})(D_G)$ is obtained by tubing $D_G$ with $\overline{m_u}$ along $s^{-1}$. In other words, we have 
    \[
    [e(\eta_{\lambda})(D^2_{\sigma})]=[D^2_{\sigma}\#_{s_+} G'\#_{s_{-}} \bar{G}''
    \#_{s^{-1}}m_{u}]
    \]
    Then we apply Lemma \ref{lem: tubing} to get 
    \[
    [e(\eta_{\lambda})(D^2_{\sigma})]-[D^2_{\sigma}]=(0,0,\lambda_+-\lambda_{-})-(\lambda_+^{-1}-\lambda_-^{-1},0,0).
    \]
\end{proof}

\begin{Lemma}\label{calculate for meridian and S^2*q} 
    Suppose $\gamma\in\pi_1(X_1;I_u,R)$and $\sigma\in\pi_1(\Sigma_0)$. Then the difference $e(\rho_{\gamma})([D^2_{\sigma}])-[D^2_{\sigma}]\in \pi_2(X^{\prime},\partial D^2_0)$ equals to
    \[
        ((\sigma-1)\cdot(\gamma_+^{-1}-\gamma_-^{-1}), \gamma_+-\gamma_-, 0)
    \]
\end{Lemma}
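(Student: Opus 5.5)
The plan is to run the argument of Lemma \ref{calc of meridian to G} again with $G$ replaced by $R$. The new feature is that $R$ is disjoint from $D^2_0$ but not from $D^2_\sigma$.

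First, represent $\gamma$ by a path $s$ from $I_u$ to $R$ with $\operatorname{int}s\cap(m_u\cup R)=\emptyset$. Take the barbell $\mathcal{B}_\gamma=\nu(m_u\cup s\cup R)$, so that $e(\rho_\gamma)$ is the barbell diffeomorphism $\beta_{(m_u,s,R)}$ extended by the identity.

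Next, list the intersections of $\mathcal{B}_\gamma$ with $D^2_\sigma$. Since $D^2_\sigma$ is built from $D^2_0\#_\kappa G_1\#_\sigma\overline{G_2}$ and $R=\{b_2'\}\times S^2$ meets each $S^2\times\{*\}$ exactly once, the disk $D^2_\sigma$ meets $R$ in two points. The first is $y_1\in G_1$, with positive sign. The second is $y_2\in\overline{G_2}$, with negative sign. Meanwhile $D^2_\sigma$ meets $m_u$ at $x_u^\pm$ near $U_0$. Hence $\mathcal{B}_\gamma\cap D^2_\sigma$ consists of four small disks $D_+,D_-,D_{y_1},D_{y_2}$, and the barbell diffeomorphism acts on each of them by a tubing.

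The effect of each tubing is as follows.
\begin{itemize}
\item $D_\pm$ is tubed with parallel copies of $R$ (for $D_+$) and $\overline R$ (for $D_-$) along $s_\pm$. By Lemma \ref{lem: tubing}(2), this contributes $(0,\gamma_+-\gamma_-,0)$. Here $\gamma_\pm$ is the class of $s_\pm$ closed up through $G\cup R$, exactly as $\lambda_\pm$ was in the $G$ case.
\item $D_{y_1}$ is tubed with $\overline{m_u}$ along $s^{-1}$. By Lemma \ref{lem: tubing}(1), this contributes $-(\gamma_+^{-1}-\gamma_-^{-1},0,0)$. To read off the group elements, I travel from the basepoint region $U_0$ inside $D^2_\sigma$ to $y_1$ through the $\kappa$-tube into $G_1$, and then follow $s^{-1}$ to $x_u^\pm$. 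Because $\kappa$ is short and $G_1$ is simply connected, this path is trivial in the relevant sense, so the elements are $\gamma_\pm^{-1}$.
\item $D_{y_2}$ has the opposite local sign, so it is tubed with $m_u$ rather than $\overline{m_u}$. The path from $U_0$ to $y_2$ inside $D^2_\sigma$ now runs through the tube along the arc $\sigma$ into $\overline{G_2}$. Since $D^2_\sigma$ is simply connected, I identify all classes via $\pi_1(X',D^2_0\cup G\cup R)\cong\pi_1(X')$. The resulting elements are $\sigma\gamma_\pm^{-1}$, so this contributes $+(\sigma(\gamma_+^{-1}-\gamma_-^{-1}),0,0)$.
\end{itemize}

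Summing the three items gives $((\sigma-1)(\gamma_+^{-1}-\gamma_-^{-1}),\gamma_+-\gamma_-,0)$, which is the claimed formula.

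The main obstacle is the bookkeeping for $D_{y_2}$. One must check that reaching $y_2$ from the reference region multiplies the group elements by exactly $\sigma$, and on the correct side. This depends on the identification $\pi_1(\Sigma_0)\to\pi_1(X')$ and on the ordering convention for composing paths in Lemma \ref{lem: tubing}. One must also check that the orientation reversal on $\overline{G_2}$ yields the overall relative sign $+\sigma$ against $-1$. I would verify both points on the model case $\sigma=1$, where the two $R$-intersections are cancelled by an isotopy of $D^2_\sigma$ and so must contribute zero to the first component, consistent with the factor $\sigma-1$.
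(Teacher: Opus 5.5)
Your proposal is correct and follows the paper's proof essentially step for step. You use the same barbell $\nu(m_u\cup s\cup R)$ meeting $D^2_\sigma$ in four disks (at $x_u^\pm$ and at the two points of $R\cap(G_1\cup\overline{G_2})$), tube the $x_u^\pm$-disks with $R,\overline{R}$ and the two $R$-disks with $\overline{m_u}$ and $m_u$, pick up the extra factor $\sigma$ on the $\overline{G_2}$-disk from the path through the $\sigma$-tube, and sum the contributions using Lemma \ref{lem: tubing}. Your $\sigma=1$ consistency check is a reasonable sanity test that the paper does not include, but the argument does not depend on it.
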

\begin{proof}
    Recall that $\Sigma_{\sigma}=\Sigma_0\#_{\kappa}(G_1\#_{\sigma}\overline{G_2})$, where $G_{1}, G_{2}$ are two parallel copies of $G$. Hence the meridian $m_k$ intersects 
    $D^2_{\sigma}$ at two points, by $\{x^+_u,x^-_u\}$, with different signs. And $R$ also intersects $D^2_{\sigma}$ at two points, denoted by $y_+\in G_1, y_-\in \overline{G_2}$, with opposite signs. Therefore, the barbell $\mathcal{B}_{\gamma}=\nu(m_u)\natural \nu(R)$ intersects $D^2_{\sigma}$ at four small disks $D_{x+}, D_{x-}, D_{y+}, D_{y-}$, centered at $x^+_u,x^-_u,y_+,y_-$ respectively. The disk $e(\rho_{\gamma})(D_{x+})$ is obtained by tubing $D_{x+}$ with $R$ along $\gamma_+$. The disk $e(\rho_{\gamma})(D_{x-})$ is obtained by tubing $D_{x-}$ with $\overline{R}$ along $\gamma_-$. The disk $e(\rho_{\gamma})(D_{y_+})$ is obtained by tubing $D_{y_+}$ with $\overline{m_k}$ along $\gamma^{-1}$. The disk $e(\rho_{\gamma})(D_{y_-})$ is obtained by tubing $D_{y_-}$ with $m_k$ along $\sigma\cdot\gamma^{-1}$ since $y_-\in G_2$ and $D^2_{\sigma}=D^2_0\#_{\kappa}(G_1\#_{\sigma}\overline{G_2})$. The rest of proof is the same as Lemma \ref{calcu of meridian to G}.
\end{proof}

As the last case, we consider the spinning families $\tau_g$ with $g\in\pi_1(X_1;I_u,I_v)$. We represent $g$ by a path $\alpha:[0,1]\to X_1$ from $I_u$ to $I_{v}$. By perturbing $\alpha|_{[0,\epsilon)}$ in the positive direction and perturbing $\alpha|_{[1-\epsilon,1]}$ in the negative direction, we obtain a path $\alpha_{+-}:[0,1]\to X'$ from $x^{+}_{u}\in D^2_0$ to $x^{-}_{v}\in D^2_0$, which represents an element $g_{+-}\in \pi_{1}(X',D^2_0)\cong \pi_{1}(X')$. We define $\alpha_{-+}, \alpha_{--}, \alpha_{++}$ similarly. They represent elements $g_{-+}, g_{--}, g_{++}\in \pi_{1}(X',D^2_0)\cong \pi_{1}(X')$. 

\begin{Lemma}\label{lemma for mk and ml}
   The difference $e(\tau_{g})([D^2_{\sigma}])-[D^2_{\sigma}]\in \widetilde{\mathcal{J}}$ equals to $(k_g,0,0)$, where $k_g\in\mathcal{J}$ is given by
     \[
((g_{++}-g_{+-})-(g_{-+}-g_{--}))-((g^{-1}_{++}-g^{-1}_{+-})-(g^{-1}_{-+}-g^{-}_{--}))
 \]  
 \end{Lemma}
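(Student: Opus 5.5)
By the Proposition relating barbells and spinning families, $e(\tau_g)=\beta_{(m_u,g,m_v)}$. The plan is therefore to use the barbell $\mathcal{B}_g=\nu(m_u\cup\alpha\cup m_v)$, where $\alpha$ represents $g$ and is chosen with $\operatorname{int}\alpha\cap(m_u\cup m_v\cup\iota)=\emptyset$. Exactly as in the proofs of Lemma \ref{calcu of meridian to G} and Lemma \ref{calculate for meridian and S^2*q}, I will track how this diffeomorphism changes $D^2_\sigma$ inside $\mathcal{B}_g$. The key point is that $D^2_\sigma$ meets $\mathcal{B}_g$ only in four small disks $D_{u\pm}, D_{v\pm}$, centered at $x^\pm_u\in m_u$ and $x^\pm_v\in m_v$. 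The tubes and the spheres $G_1,G_2$ used to build $\Sigma_\sigma$ lie away from the meridians, so they never enter the barbell. Because the four intersection points sit in $U_0$, independent of $\sigma$, the answer should not depend on $\sigma$, which matches the statement.

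From the model barbell diffeomorphism $\beta_{(S_0,\gamma,S_1)}$, a normal disk to $S_0$ is sent to itself tubed with a copy of $S_1$ along the bar, and a normal disk to $S_1$ is sent to itself tubed with $\overline{S_0}$ along the reversed bar. Applying this with $S_0=m_u$ and $S_1=m_v$:
\begin{itemize}
\item $e(\tau_g)(D_{u\pm})$ becomes $D_{u\pm}$ tubed with $\pm$ a parallel copy of $m_v$ along the path $\alpha$ pushed off in the $\pm$ direction at its start;
\item $e(\tau_g)(D_{v\pm})$ becomes $D_{v\pm}$ tubed with $\mp$ a parallel copy of $m_u$ along $\alpha^{-1}$ pushed off in the $\pm$ direction at its start.
\end{itemize}
The sign $\pm$ is the local intersection sign of $D_{u\pm}$ with $m_u$ (respectively $D_{v\pm}$ with $m_v$). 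Hence
\[
[e(\tau_g)(D^2_\sigma)]=[D^2_\sigma\#_{\alpha_{+}}m_v\#_{\alpha_{-}}\overline{m_v}'\#_{\alpha^{-1}_{+}}\overline{m_u}\#_{\alpha^{-1}_{-}}m_u'].
\]

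Next I apply Lemma \ref{lem: tubing}(1) to each of the four tubings. Tubing with $m_v$ along a path $\gamma$ adds $(\gamma^+-\gamma^-,0,0)$, where $\gamma^\pm$ ends at $x_v^\pm$.
\begin{itemize}
\item The tube from $x_u^+$ contributes $g_{++}-g_{+-}$.
\item The oppositely oriented tube from $x_u^-$ contributes $-(g_{-+}-g_{--})$.
\item The two tubes from $x_v^\pm$ with $m_u$ contribute the inverse classes $-\big((g^{-1}_{++}-g^{-1}_{+-})-(g^{-1}_{-+}-g^{-1}_{--})\big)$. Here the reversed path from $x_v^{\mp}$ to $x_u^{\pm}$ is identified with the inverse of the corresponding $g_{\pm\mp}$ in $\pi_1(X',D^2_0)\cong\pi_1(X')$, using simple connectivity of $D^2_0$.
\end{itemize}
None of these tubings meets $G$ or $R$, so the second and third coordinates vanish. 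Summing the four contributions gives $k_g$, and each summand visibly has augmentation $0$, so $k_g\in\mathcal{J}$.

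The main obstacle is bookkeeping the signs and subscripts consistently: which push-off direction at each end corresponds to which subscript, and the orientation with which each meridian copy appears. I would settle this first in the model $\mathbb{R}^4$ barbell, following the explicit description of $\beta$ by spinning $F_0\cap\mathcal{B}_{-1}$ around $S_1$. The check is the sign pattern of the $u$-side terms: the disks $D_{u+}, D_{u-}$ have opposite intersection signs with $m_u$, so their tubings with $m_v$ must contribute with opposite overall sign. The $v$-side terms then follow by the symmetry of the barbell, whose two ends play swapped roles with a reversed orientation. A final consistency check is that the answer is antisymmetric under $g\mapsto g^{-1}$ (swapping $u$ and $v$), as it should be since $\tau_{g^{-1}}$ agrees with $\tau_g^{\pm1}$.
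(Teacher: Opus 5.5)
Your proposal is correct and follows essentially the same route as the paper. Like the paper, you realize $e(\tau_g)$ as the barbell diffeomorphism on $\nu(m_u\cup\alpha\cup m_v)$, note that it meets $D^2_\sigma$ in four normal disks, write the image as $D^2_\sigma$ tubed with $m_v,\overline{m_v},\overline{m_u},m_u$, and apply Lemma \ref{lem: tubing}(1) four times; your explicit matching of each reversed path with the corresponding $g^{-1}_{\pm\pm}$ is the bookkeeping the paper leaves implicit.
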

\begin{proof}
We perturb $\beta$ to a path $\beta'$ from $m_{u}$ to $m_{v}$. And we consider the barbell $\mathcal{B}_{g}=\nu(m_{u}\cup \beta'\cup m_{v})$. Then 
$e(\tau_{g})(D^2_{\sigma})$ is isotopic the image of $D^2_{\sigma}$ under the barbell diffeomorphism implemented along $\mathcal{B}_{g}$. Note that  $\mathcal{B}_{g}\cap D^2_{\alpha}$ is a disjoint union of 4-disks small $D_{++}, D_{+-}, D_{-+}, D_{++}$. Therefore, we have 
\[
[e(\tau_{g})(D^2_{\sigma})]=[D^2_{\sigma}\#_{\beta_{+,*}} (m_{v}) \#_{\beta_{-,*}} (\overline{m_{v}})\#_{\beta_{*,+}} (\overline{m_{u}})\#_{\beta_{*,-}}(m_{u})]
\]
Here $\beta_{*,+}:I\to X'$ is the path from $m_{u}$ to $x^{+}_{v}$ obtained by perturbing $\beta$. Then we apply Lemma \ref{lem: tubing} four times.\end{proof}
\begin{Corollary}
We have $e(\tau_{g})([D^2_{\sigma}])-[D^2_{\sigma}]\in \mathcal{J}^2\oplus \mathbb{Z}[\pi_{1}(X')]^{\oplus 2}  $ \end{Corollary}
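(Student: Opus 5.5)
The plan is to read the corollary directly off Lemma~\ref{lemma for mk and ml}. That lemma already gives $e(\tau_g)([D^2_\sigma])-[D^2_\sigma]=(k_g,0,0)$, so the last two components vanish. It therefore suffices to show $k_g\in\mathcal{J}^2$, where $\mathcal{J}$ is the augmentation ideal of $\mathbb{Z}[\pi_1(X')]$.

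The key step is to factor the four group elements $g_{\pm\pm}$. The path $\alpha_{\epsilon\delta}$ differs from $\alpha$ only in its initial push-off (direction $\epsilon$ at $I_u$) and terminal push-off (direction $\delta$ at $I_v$). Concatenating with paths in $D^2_0$, I will write
\[
g_{\epsilon\delta}=a_\epsilon\, h\, b_\delta ,
\]
where $h$ is a fixed element determined by $\alpha$, and $a_\pm$, $b_\pm$ are loops based near $\partial D^2_0$ that pass from $x^+_u$ to $x^-_u$ (respectively $x^+_v$ to $x^-_v$) across the arc. Setting $a=a_-a_+^{-1}$, the element $a$ is the class of a small loop going once around $I_u$ through the two sides of the meridian. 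The same holds for $b$ at $I_v$. In particular we may normalize $a_+=b_+=1$, $a_-=a$, $b_-=b$.

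With this normalization,
\[
(g_{++}-g_{+-})-(g_{-+}-g_{--})=h-hb-ah+ahb=(1-a)\,h\,(1-b).
\]
Since $1-a\in\mathcal{J}$ and $1-b\in\mathcal{J}$, this term lies in $\mathcal{J}\cdot\mathbb{Z}[\pi_1]\cdot\mathcal{J}\subset\mathcal{J}^2$. Similarly, $g_{\epsilon\delta}^{-1}=b_\delta^{-1}h^{-1}a_\epsilon^{-1}$, so the inverse term equals $(1-b^{-1})\,h^{-1}\,(1-a^{-1})$, which also lies in $\mathcal{J}^2$. Hence $k_g\in\mathcal{J}^2$.

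The main obstacle is justifying the factorization consistently: the identification $\pi_1(X',D^2_0)\cong\pi_1(X')$ must carry $\alpha_{\epsilon\delta}$ to $a_\epsilon h b_\delta$ with the same $h$ for all four sign choices. I would handle this by choosing one fixed path in $D^2_0$ from each of $x^\pm_u, x^\pm_v$ to a basepoint $x'$, and checking that changing the push-off direction at one end only multiplies on that side.

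Actually, $a$ may well be trivial in $\pi_1(X')$, since $x^\pm_u$ are joined inside $D^2_0$. Even so, any elements of this form give differences lying in $\mathcal{J}^2$, so the conclusion holds in either case.
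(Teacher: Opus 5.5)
Your argument is correct and is essentially the paper's proof. The paper writes the alternating sum as $g_{--}(g_1-1)(g_2-1)\in\mathcal{J}^2$, where $g_1,g_2$ are the meridian-crossing generators, which is your $(1-a)h(1-b)$; your version, attaching $a$ to $I_u$ and $b$ to $I_v$ separately, also handles $u=v$ cleanly, a case the paper's notation glosses over.

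One correction to your last remark: $a$ is not trivial. The path in $D^2_0$ from $x^-_u$ to $x^+_u$ crosses the 2-cell, so in $\pi_1(X')\cong\pi_1(X)$ the class $a$ is the generator of $\pi_1(\Sigma_0)$ represented by a loop meeting $e^1_u$ once, which is the paper's $g_1$ or $g_2$. As you note, this does not affect the conclusion.
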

\begin{proof}
Let $g_{1}=(1,0)\in \pi_{1}(X')$ and $g_{2}=(0,1)\in \pi_{1}(X')$. Then 
\[
g_{++}-g_{+-}-g_{-+}+g_{--}=g_{--}\cdot (g_{1}g_{2}-g_{1}-g_{2}+1)=g_{--}\cdot (g_{1}-1)\cdot (g_{2}-1)\in \mathcal{J}^2.
\]
Similarly, $g^{-1}_{++}-g^{-1}_{+-}-g^{-1}_{-+}+g^{-1}_{--}\in \mathcal{J}^2$.
\end{proof}


\section{Proof of Theorem \ref{main thm}}
\label{sec_proof}
First we need some algebraic preparations. 
Note that $\mathcal{J}$ is a free abelian group with basis $\{\sigma-1|\sigma\neq1\in\pi_1(X')\}$ and $\mathcal{J}^2$ is an abelian group generated by $\{(\sigma_1-1)(\sigma_2-1)|\sigma_1,\sigma_2\in\pi_1(X')\}$. To distinguish from the addition in $\mathbb{Z}[\pi_1(X')]$, we write the addition in $\pi_1(X')$ as multiplication. Since $\pi_1(X')\cong\mathbb{Z}^{4}$, we know that $\mathcal{J}^2$ is generated by  
\[
    \{(a_1+b_1,\cdots,a_4+b_4)-(a_1,\cdots,a_4)-(b_1,\cdots,b_4)+(0,\cdots,0)|(a_1,\cdots,a_4),(b_1,\cdots,b_4)\in\pi_1(X')\}
\]
Therefore, for any $(a_1,\cdots,a_4),(b_1,\cdots,b_4)\in\pi_1(X')$, we have the equation in $\mathbb{Z}[\pi_1(X')]/\mathcal{J}^2$:
\begin{equation}\label{subtraction in group algebra}
    (a_1,\cdots,a_4)-(b_1,\cdots,b_4)=(a_1-b_1,\cdots,a_4-b_4)-(0,\cdots,0).
\end{equation}
It is not hard to deduce from \eqref{subtraction in group algebra} the following equations in $\mathbb{Z}[\pi_1(X')]/\mathcal{J}^2$:
\[
    (a_1,\cdots,a_4)-(0,0)=(0,0)-(-a_1,\cdots,-a_4),
\]
\[
    k(a_1,\cdots,a_4)-k(0,\cdots,0)=(ka_1,\cdots,ka_4)-(0,0),
\]
\[
     (a_1,\cdots,a_4)-(0,0)+(b_1,\cdots,b_4)-(0,0)=(a_1+b_1,\cdots,a_4+b_4)-(0,0),
\]
where $(a_1,\cdots,a_4),(b_1,\cdots,b_4)\in\pi_1(X')$ and $k\in\mathbb{Z}$. Consider the homomorphism between abelian groups $F:\mathcal{J}\rightarrow \mathbb{Z}^4,$ by extending \[(a_1,\cdots,a_4)-(0,\cdots,0)\mapsto  (a_1,\cdots,a_4)\] $\mathbb{Z}$-linearly.
\begin{Lemma}\label{isomorphism for J/J^2}
    The map $F$ induces an isomorphism $\overline{F}:\mathcal{J}/\mathcal{J}^2\rightarrow \mathbb{Z}^4$
\end{Lemma}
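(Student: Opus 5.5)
This is the classical fact that $I/I^2$ is the abelianization of the group, here $\pi_1(X')\cong\mathbb{Z}^4$. The plan is to check that $F$ is well defined, that it kills $\mathcal{J}^2$, and that it is surjective. Then I construct an inverse map.

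\emph{Well-definedness and vanishing on $\mathcal{J}^2$.} As noted above, $\mathcal{J}$ is free abelian on $\{\sigma-1\mid \sigma\neq 1\}$. So $F$ is the unique homomorphism sending $\sigma-1$ to $\sigma$, written additively in $\mathbb{Z}^4$. Equivalently, for $\sum a_i g_i\in\mathcal{J}$ we have $F(\sum a_i g_i)=\sum a_i\, g_i$, computed in $\mathbb{Z}^4$. This holds because $\sum a_i=0$ lets us rewrite $\sum a_i g_i=\sum a_i(g_i-1)$. Next, $\mathcal{J}^2$ is generated by the elements $(\sigma_1-1)(\sigma_2-1)=\sigma_1\sigma_2-\sigma_1-\sigma_2+1$. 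Applying $F$ gives $(\sigma_1+\sigma_2)-\sigma_1-\sigma_2+0=0$ in $\mathbb{Z}^4$. Hence $F$ descends to $\overline{F}:\mathcal{J}/\mathcal{J}^2\to\mathbb{Z}^4$. It is surjective, since $\sigma-1\mapsto\sigma$ and $\sigma$ ranges over all of $\mathbb{Z}^4$.

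\emph{Injectivity.} I define $G:\mathbb{Z}^4\to\mathcal{J}/\mathcal{J}^2$ by $G(\sigma)=[\sigma-1]$. The third displayed identity derived from \eqref{subtraction in group algebra} says exactly that $[\sigma\tau-1]=[\sigma-1]+[\tau-1]$ modulo $\mathcal{J}^2$. This can also be read off directly from $\sigma\tau-1=(\sigma-1)(\tau-1)+(\sigma-1)+(\tau-1)$. So $G$ is a group homomorphism. Next, $G\circ\overline{F}$ is the identity on the generators $[\sigma-1]$ of $\mathcal{J}/\mathcal{J}^2$, and these generate because $\mathcal{J}$ is spanned by the $\sigma-1$. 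Therefore $G\circ\overline{F}=\mathrm{id}$. Combined with surjectivity, this shows $\overline{F}$ is an isomorphism with inverse $G$.

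There is no real obstacle here. The only point needing care is the well-definedness of $F$ on all of $\mathcal{J}$, including the correct description of $F$ on a general element. That is handled by the basis $\{\sigma-1\}$ and the rewriting $\sum a_ig_i=\sum a_i(g_i-1)$.
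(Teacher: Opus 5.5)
Your proof is correct and follows essentially the same route as the paper. You check that $F$ kills the generators $(\sigma_1-1)(\sigma_2-1)$ of $\mathcal{J}^2$, note surjectivity, and get injectivity from the identity $\sigma\tau-1\equiv(\sigma-1)+(\tau-1)$ modulo $\mathcal{J}^2$. The only difference is packaging: you build the explicit inverse homomorphism $\sigma\mapsto[\sigma-1]$, whereas the paper uses the same identities to rewrite every class as a single $[\sigma-1]$ and then reads off that the kernel is trivial.
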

\begin{proof}
    Since $\mathcal{J}^2$ is generated by elements of the form:
    \[
        (a_1+b_1,\cdots,a_4+b_4)-(a_1,\cdots,a_4)-(b_1,\cdots,b_4)+(0,\cdots,0),
    \]
    we know that $\mathcal{J}^2\subset \operatorname{ker}F$ and $F$ can be reduced to $\overline{F}:\mathcal{J}/\mathcal{J}^2\to \mathbb{Z}^4$. Obviously, $\overline{F}$ is surjective. For any $\sum\limits_{i=1}^{n}r_i((a_{i1},\cdots,a_{i4})-(0,\cdots,0))\in \mathcal{J}/\mathcal{J}^2$, where $\sum\limits_{i=1}^n r_i=0$, it can be deduced from the three equations above that 
    \[
        \sum\limits_{i=1}^{n}r_i((a_{i1},\cdots,a_{i4})-(0,\cdots,0))=\left(\sum\limits_{i=1}^{n-1}r_i(a_{i1}-a_{n1}),\cdots, \sum\limits_{i=1}^{n-1}r_i(a_{i4}-a_{n4})\right)-(0,\cdots,0).
    \]
    Hence, any element in $\mathcal{J}/\mathcal{J}^2$ can be written as $(a_1,\cdots,a_4)-(0,\cdots,0)$ for some $(a_1,\cdots,a_4)\in\pi_1(X')$. For any $x\in\operatorname{ker}\overline{F}$, write $x$ as $(a_1,\cdots,a_4)-(0,\cdots,0)$. Then
    \[
        0=\overline{F}(x)=\overline{F}\left((a_1,\cdots,a_4)-(0,\cdots,0)\right)=(a_1,\cdots,a_4),
    \]
    and we know that $x=0\in \mathcal{J}/\mathcal{J}^2$. Thus, $\overline{F}$ is injective.
\end{proof}   

Now we can proof Theorem \ref{main thm}:
\begin{proof}[Proof of Theorem \ref{main thm}]
    (1) The fact that $\{i_{\sigma}\}$ are homotopic follows from \cite[Lemma 4.2]{LXZ26pseudoisotopies3manifoldsinfinitefundamental} directly. Recall from Section \ref{sec_constru} that for any $\sigma\in\pi_1(\Sigma_0)$, we construct an embedded surface $\Sigma_{\sigma}=\Sigma_0\#_{\kappa}G_{12}$, where $G_{12}=G_1\#_{\sigma}\overline{G_2}$ and $\kappa:I\rightarrow X$ is a trivial path from $\Sigma_0$ to $G_1$ with $\operatorname{int}\kappa\cap(\Sigma_0\cup G_1\cup G_2)=\emptyset$. Pick another copy $G_3=S^2\times\{p_3\}$ of the geometric dual $G$ which is very close to $G_1=S^2\times\{p_1\}$ and homotope one endpoint of $\kappa$ along $\Sigma_0\cup G_3$ to get a trivial path $\kappa_1$ from $G_3$ to  $G_1$ with $\operatorname{int}\kappa_1\cap(\Sigma_0\cup G_3\cup G_{12})=\emptyset$. Note that the  barbell diffeomorphism $f_{\sigma}$ on $X$ corresponding to the barbell $\mathcal{B}=\nu(G_3\cup\kappa_1\cup G_{12})$ maps $\Sigma_0$ to $\Sigma_{\sigma}$. Hence, for any $\sigma_1,\sigma_2\in\pi_1(\Sigma_0)$, $\Sigma_{\sigma_1}$ is mapped to $\Sigma_{\sigma_2}$ by $f_{\sigma_2}\circ f_{\sigma_1}^{-1}$. As a consequence, all surfaces $\Sigma_{\sigma}$ have diffeomorphic complements in $X$.
    
    (2) Suppose $\sigma_1\neq\sigma_2\in \pi_1(\Sigma_0)$. 
    Pick any isotopy $\mathcal{I}$ from $i_{\sigma_1}$ to $i_1'$ with $i_{\sigma_1}'|_{H_0\cup H_1}=i_{\sigma_2}|_{H_0\cup H_1}$, and restrict $\mathcal{I}$ to $H_0\cup H_1$. Then we get a loop in $\Emb(H_0\cup H_1,X)$, denoted by $\theta$. Note that $i_{\sigma_1}'|_{(i_{\sigma_1}')^{-1}(X^{\prime})}$ equals to $e(\theta)$ in $\pi_0(\Diff(X^{\prime},i_0(H_2)\cap X^{\prime}))$. Therefore the obstruction $o(i_{\sigma_1},i_{\sigma_2},\mathcal{I})$ is exactly \[[e(\theta)(D^2_{\sigma_1})]-[D^2_{\sigma_2}]\in \widetilde{\mathcal{J}}.\] Apply $e(\theta^{-1})$ to both side of the following equation:
    \begin{equation*}
        [e(\theta)(D^2_{\sigma_1})]-[D^2_{\sigma_2}]=e(\theta)([D^2_{\sigma_1}]-[D^2_{\sigma_2}])+([e(\theta)(D^2_{\sigma_2})]-[D^2_{\sigma_2}])
    \end{equation*}
    it suffices to prove \[[D^2_{\sigma_1}]-[D^2_{\sigma_2}]-([e(\theta^{-1})(D^2_{\sigma_2})]-[D^2_{\sigma_2}])\neq 0.\] For this, we first calculate $[e(\theta^{-1})(D^2_{\sigma_2})]-[D^2_{\sigma_2}]$. Since $\theta$ is an arbitrary element in $\pi_1(\Emb(H_0\cup H_1,X))$, we may replace $\theta$ by $\theta^{-1}$ and compute $[e(\theta)(D^2_{\sigma_2})]-[D^2_{\sigma_2}]$ instead. By Proposition \ref{move of basepoint}, there exists another \[\theta^{\prime}\in\pi_1(\Emb(H_0\cup H_1,X)),\] such that $\mathrm{Res}(\theta^{\prime})=\mathrm{Res}(\theta)$, and \[e(\theta^{\prime})([D^2_{\sigma_2}])-[D^2_{\sigma_2}]=(0,1-\overline{\theta},0),\] where $\overline{\theta}=\mathrm{Res}(\theta)\in\pi_1(X,x_0)\cong\pi_1(X^{\prime})$. Therefore, $(\theta^{\prime})^{-1}\theta\in \ker \mathrm{Res}$. Since we have the exact sequence \eqref{split as basepoint}, $(\theta^{\prime})^{-1}\theta$ lies in the image of $\pi_1(\Emb^{\prime}_{\partial}(I\sqcup I,X_1))\rightarrow \pi_1(\Emb(H_0\cup H_1,X))$. 
    
    By Proposition \ref{description of normal section}, $\pi_1(\Emb^{\prime}_{\partial}(I\sqcup I,X_1))$ is generated by $\hat{\tau}_g, \hat{\eta}_{\lambda}, \hat{\rho}_{\gamma}$ and $\xi_1,\xi_2$. Hence, we have 
    \[(\theta^{\prime})^{-1}\theta=\xi_1^{t_1}\circ \xi_2^{t_2}\circ\prod_{i}\rho_{\gamma_i}\circ\prod_{j}\tau_{g_j}\circ\prod_{r}\eta_{\lambda_r}\] for some $\gamma_i\in\pi_1(X^{\prime};I_u,R), g_j\in\pi_1(X^{\prime};I_u,I_v), \lambda_r\in \pi_1(X^{\prime};I_u,G), t_1,t_2\in\mathbb{Z}$.  By Lemma \ref{calcu of meridian to G}, Lemma \ref{calculate for meridian and S^2*q} and Lemma \ref{lemma for mk and ml},
    \begin{equation*}
        \begin{split}
            [e(\theta)(D^2_{\sigma_2})]-[D^2_{\sigma_2}]=&(0,1-\overline{\theta},0)\\
            &+t_1\cdot (((1,0)+(-1,0)-2(0,0)),0,0)\\
            &+t_2\cdot (((0,1)+(0,-1)-2(0,0)),0,0)\\
            &+\sum\limits_i((\sigma-1)\cdot(\gamma_{i+}^{-1}-\gamma_{i-}^{-1}), \gamma_{i+}-\gamma_{i-}, 0)\\
            &+(\sum\limits_{j}k_{g_j},0,0)
            \\&+(0,0,\sum\limits_{r} \lambda_{r+}-\lambda_{r-})-(\sum\limits_{r} \lambda_{r+}^{-1}-\lambda_{r-}^{-1},0,0),
        \end{split}
    \end{equation*}
    where $k_{g_j}\in \mathcal{J}^2$.
    
   Note that $[D^2_{\sigma_1}]-[D^2_{\sigma_2}]=(0,0,-\sigma_1+\sigma_2)$. If there exists an isotopy $\mathcal{I}$ such that $o(i_{\sigma_1},i_{\sigma_2},\mathcal{I})=0$, we must have  $\sum\limits_r(\lambda_{r+}-\lambda_{r-})=-\sigma_1+\sigma_2\in \mathbb{Z}[\pi_1(X^{\prime})]$ by considering the third component. Hence $\sum\limits_r(\lambda_{r+}^{-1}-\lambda_{r-}^{-1})=\sigma_1^{-1}-\sigma_2^{-1}$. On the other hand, we have the following equation by considering the first component:
    \begin{equation}\label{final calculation}
        \begin{split}
             \sum\limits_r(\lambda_{r+}^{-1}-\lambda_{r-}^{-1})&=t_1(((1,0)+(-1,0)-2(0,0))\\&+t_2(((0,1)+(0,-1)-2(0,0))\\
             &+\sum\limits_{j}k_{g_j}+\sum\limits_i(\sigma-1)\cdot(\gamma_{i+}^{-1}-\gamma_{i-}^{-1})
        \end{split}
    \end{equation}
     Therefore, the right hand side of \eqref{final calculation} belongs to the ideal $\mathcal{J}^2$. However, the left hand side of \eqref{final calculation} equals to $\sigma_1^{-1}-\sigma_2^{-1}$. As a consequence, $\sigma_1^{-1}-\sigma_2^{-1}=0\in \mathcal{J}/\mathcal{J}^2$,
    which is impossible by the isomorphism $\overline{F}$ constructed in Lemma \ref{isomorphism for J/J^2}.

    (3)
    Denote the result of finitely many times of external stabilizations of $X$ by $X_{[n]}=X\#n(S^2\times S^2)$. Since the connected sum is perfomed away from the surfaces $\Sigma_{\sigma}$, $i_{\sigma}$ can be viewed as an embedding into $X_{[n]}$. We denote the closure of $X_{[n]}-\nu(i_0(H_0\cup H_1))$ in $X_n$ by $X_{[n]}^{\prime}$. Then 
    \[
        \pi_2(X_{[n]}^{\prime},\partial D^2_0)\cong H_2(\widetilde{X_{[n]}^{\prime}})\cong H_2(\widetilde{X^{\prime}})\oplus A,
    \] where $A=\oplus_{n}(\mathbb{Z}[\pi_1(X^{\prime})]\oplus \mathbb{Z}[\pi_1(X^{\prime})])$, each summand in which is generated by the $S^2\times S^2$ in the external stabilization part in $X_n$. Denote the closure of $X_{[n]}-\nu(i_0(H_0))$ in $X_{[n]}$ by $(X_{[n]})_1$. Then $\pi_1(\Emb^{\prime}_{\partial}(I\sqcup I,(X_{[n]})_1))$ is generated by $\hat{\tau}_g, \hat{\eta}_{\lambda}, \hat{\rho}_{\gamma}, \hat{\delta}_{\mu}$ and $\xi_1,\xi_2$, where $\hat{\tau}_g, \hat{\eta}_{\lambda}, \hat{\rho}_{\gamma}$ and $\xi_1,\xi_2$ are the same as before and $\hat{\delta}_{\mu}$ is the spinning family of $I_k$ around some copy $S$ along $\mu\in\pi_1(X_{[n]}^{\prime};I_k,S)$. Here, $S$ stands for $S^2\times\{*\}$ or $\{*\}\times S^2$ in the external stabilization part in $X_{[n]}$. 

    Similar to (2), for any isotopy $\mathcal{I}$ in $X_{[n]}$ from $i_1$ to $i_1'$ with $i_1'|_{H_0\cup H_1}=i_2|_{H_0\cup H_1}$, we restrict it to $H_0\cup H_1$ to get a loop in $\Emb(H_0\cup H_1,X_{[n]})$. As in (2), 
    \[
        \theta=(\theta^{\prime})^{-1}\circ\xi_1^{t_1}\circ \xi_2^{t_2}\circ\prod_{i}\rho_{\gamma_i}\circ\prod_{j}\tau_{g_j}\circ\prod_{r}\eta_{\lambda_r}\circ\prod_{t}\delta_{\mu_t},
    \]
     for some $\gamma_i\in\pi_1(X^{\prime};I_k,S^2\times\{q\}),$ $ g_j\in\pi_1(X^{\prime};I_k,I_l), $$\lambda_r\in \pi_1(X^{\prime};I_k,G),\mu_t\in\pi_1(X_{[n]}^{\prime};I_k,S)), t_1,t_2\in\mathbb{Z}$.
    Note that the barbell $\mathcal{B}_{\mu_t}=\nu(m_k)\natural \nu(S)$ intersects $D^2_{\sigma_2}$ in $\{x_+,x_-\}\subset m_k$ transversely. $e(\delta_{\mu_t})(D^2_{\sigma_2})$ is obtained by tubing $D^2_{\sigma_2}$ with  $S$ , $\overline{S}$ near $x_+$ , $x_-$, respectively. Therefore, 
    \[
        e(\delta_{\mu_t})([D^2_{\sigma_2}])-[D^2_{\sigma_2}]=(0,\mu_{t+}-\mu_{t-})\in H_2(\widetilde{X^{\prime}})\oplus A
    \]
    Hence, the argument in (2) still works by considering the $H_2(\widetilde{X^{\prime}})$ component of $[e(\theta)(D^2_{\sigma_2})]-[D^2_{\sigma_2}]$.
\end{proof}

\bibliographystyle{amsalpha}
\bibliography{references}

\end{document}